\def\qandq{\qquad\text{and}\qquad}
\newtheorem{example}{Example}[section]
\newtheorem{note}[example]{Note}
\newtheorem{theorem}[example]{Theorem}
\newtheorem{corollary}[example]{Corollary}
\newtheorem{definition}[example]{Definition}
\newtheorem{proposition}[example]{Proposition}
\newtheorem{lemma}[example]{Lemma}
\def\Proof{\noindent \it Proof -- \rm}
\def\qed{\hspace{3.5mm} \hfill \vbox{\hrule height 3pt depth 2 pt width 2mm}
\bigskip}
\def\SetSp{\mathcal{S}et}
\def\FSp{\mathcal{F}}
\def\ZSp{\mathcal{Z}}
\def\BSp{\mathcal{R}}
\def\RSp{\mathcal{R}}
\def\WSp{\mathcal{W}}
\def\<{\langle}
\def\>{\rangle}
\def\NN{{\mathbb N}}    
\def\QQ{{\mathbb Q}\, } 
\def\G{{\bf G}}         
\def\gautrid{\!\prec\!}   
\def\miltrid{\circ}       
\def\droittrid{\!\succ\!} 
\def\BWU{\mathbb{RW}} 
\def\BWTSL{{\rm RW^\Sigma}} 
\def\BWTL{{\rm RW^\Sigma_{T}}} 
\def\BWSL{{\rm RW^\Sigma_{DS}}} 
\def\BWL{{\rm RW^\Sigma_{D}}} 
\def\BWTS{{\rm RW}} 
\def\BWT{{\rm RW_{T}}} 
\def\BWS{{\rm RW_{DS}}} 
\def\BW{{\rm RW_{D}}} 
\newcommand\opMould[1]{\mathcal{M}\mathrm{ould}^{#1}}
\def\opGR{\mathcal{GR}_4}
\def\opFree{\mathcal{G}_4}
\def\opFF{\mathcal{F\!F}} 
\def\opMon{\mathcal{M}\mathrm{on}} 
\def\opMFF{\mathcal{MF\!F}} 
\def\NAP{\operatorname{NAP}}
\newcommand\opRatFct[1]{#1\mathrm{\text{-}\mathcal{R}at\mathcal{F}ct}}
\newcommand\opDend{\mathrm{\mathcal{D}end}}
\newcommand\opTriDend{\mathrm{\mathcal{T}ridend}}
\newcommand\SET[1]{\operatorname{SET}(#1)}
\newcommand\SYMOP[1]{{#1}^\Sigma}
\tikzset{baseline={([yshift=-3.5pt]current bounding box.center)}}
\tikzset{level distance = 0.7cm, sibling distance = 2em}
\tikzset{edge from parent/.style={
       draw,
       edge from parent path = {(\tikzparentnode) -- 
                                (\tikzchildnode)} 
    }}
\tikzset{root/.style={inner sep=1.5pt, minimum width=1.2em,
                      fill=blue!30, rounded corners}}
\tikzset{white/.style={inner sep=1.2pt, minimum width=1.2em,
                       fill=blue!10, rounded corners}}
\tikzset{red/.style={inner sep=1.2pt, minimum width=1.2em,
                       fill=red, rounded corners}}
\def\treeone#1{\begin{tikzpicture}
\node [root] {$#1$}
  child [missing]
  child [missing];
\end{tikzpicture}}
\def\geno#1#2{\begin{tikzpicture}
\node [root]{$\scriptstyle \left\{#1, #2\right\}$};
\end{tikzpicture}}
\def\geng#1#2{\begin{tikzpicture}
\node [root]{$\scriptstyle \left\{#1\right\}$}
  child {
    node [white] {$\scriptstyle \left\{#2\right\}$}
  };
\end{tikzpicture}}
\def\gend#1#2{\begin{tikzpicture}
\node [root]{$\scriptstyle \left\{#2\right\}$}
  child {
    node [white] {$\scriptstyle \left\{#1\right\}$}
  };
\end{tikzpicture}}
\def\gens#1#2{\begin{tikzpicture}
\node [fill=blue!30] [red] {$\scriptstyle \left\{\right\}$}
  child {
    node [white] {$\scriptstyle \left\{#2\right\}$}
  }
  child {
    node [white] {$\scriptstyle \left\{#1\right\}$}
  };
\end{tikzpicture}}
\def\treeNode#1{\begin{tikzpicture}
\node [root] {$#1$};
\end{tikzpicture}}
\def\treeLft#1#2{\begin{tikzpicture}
\node [root] {$#1$}
  child {
    node [white] {$#2$}
      child [missing]
      child [missing]
  }
  child [missing];
\end{tikzpicture}}
\def\treeRgt#1#2{\begin{tikzpicture}
\node [root]{$#1$}
  child [missing]
  child {
    node [white] {$#2$}
      child [missing]
      child [missing]
  };
\end{tikzpicture}}
\title[]%
{A set-operad of formal fractions\\
and dendriform-like sub-operads}
\author[F. Chapoton, F. Hivert, and J.-C.~Novelli]%
{Fr\'ed\'eric Chapoton, Florent Hivert, and Jean-Christophe Novelli}
\address[Chapoton]{Institut Camille Jordan, Universit\'e Claude Bernard Lyon 1,
69622 Villeurbanne Cedex, FRANCE}
\address[Hivert]{Laboratoire de Recherche en Informatique, Universit\'e
Paris-Sud, 91405 Orsay Cedex, FRANCE}
\address[Novelli] {Institut Gaspard Monge, Universit\'e de Marne-la-Vall\'ee \\
5 Boulevard Descartes \\Champs-sur-Marne \\77454 Marne-la-Vall\'ee cedex 2 \\
FRANCE}
\email[Fr\'ed\'eric Chapoton]{chapoton@math.univ-lyon1.fr}
\email[Florent Hivert]{hivert@lri.fr}
\email[Jean-Christophe Novelli]{novelli@univ-mlv.fr}
\date{\today}
\begin{document}

\begin{abstract}
  We introduce an operad of formal fractions, abstracted from the
  Mould operads and containing both the Dendriform and the
  Tridendriform operads. We consider the smallest set-operad contained
  in this operad and containing four specific elements of arity two,
  corresponding to the generators and the associative elements of the
  Dendriform and Tridendriform operads. We obtain a presentation of
  this operad (by binary generators and quadratic relations) and an
  explicit combinatorial description using a new kind of bi-colored
  trees. Similar results are also presented for related symmetric operads.
\end{abstract}

\maketitle

{\footnotesize
\tableofcontents
}

The main theme of this article is about combinatorial and algebraic
descriptions of some set-operads. The notion of operad has its historical
roots in algebraic topology, and has become a useful and classical tool in
this field. More recently, operads have also been considered from a more
algebraic point of view, namely in the monoidal categories of vector spaces
and chain complexes instead of the monoidal category of topological
spaces. The homology functor is a natural way to pass from topological operads
to algebraic operads.

But operads can also be considered with a combinatorial state of mind,
and the natural ambient category is then the monoidal category of
finite sets. If one is given an operad $P$ in the category of vector
spaces, there is a simple idea to obtain an operad in the category of
finite sets: choose a finite set of elements of $P$ and consider the
closure of this set under the composition maps of $P$. One can then
try to count the finite sets obtained in this way, and to describe
their elements in an explicit way.

As a side remark, let us note that there is an algebraic motivation
for doing this, related to categorification. If the underlying vector
spaces of an operad could be considered as the Grothendieck groups of
some Abelian categories, and composition maps as coming from functors
between these categories, then elements of the operad would correspond
to objects of these categories. Finding subsets of elements closed
under the composition maps and describing their combinatorics could be
a way to find hints on the nature of objects in the Abelian
categories.
\bigskip

This article started with the aim to apply this closure procedure to
the generators and their sums, in two operads in the category of
vector spaces introduced by J.-L. Loday, namely the dendriform
\cite{LR} and tridendriform operads \cite{LRtri}. It has been proved
in \cite{Ch} and \cite{CHNT} (see also \cite{Lod10}) that these
operads can be considered as sub-operads of two different operads of
fractions. Our problem is therefore to describe combinatorially the fractions that can
be obtained by iterated compositions of the fractions corresponding to
generators of $\opDend$ or $\opTriDend$ and their sums.

Because these two operads of fractions have very similar composition
maps, one can define a set-operad of formal fractions $\opFF$ in which
the closure problems for $\opDend$ and $\opTriDend$  can be considered
simultaneously. Indeed, the chosen subset of $\opDend$ is contained in the
chosen subset of $\opTriDend$ , when both are considered as formal
fractions. One is therefore lead to the following question: describe
the closure of four fractions in $\opFF(2)$ (associated with three
generators of $\opTriDend$  and their sum) under the compositions of the
operad $\opFF$. This defines a set-operad, denoted by $\opFF_4$.

Our main results are a presentation by binary generators and quadratic
relations and an explicit combinatorial description of $\opFF_4$ using a
new kind of bicolored trees, called the red and white trees.

The main interest of those trees is to provide simple ways to answer two
natural questions: compute the automorphism group of a given element
and check if a given tree is in a given set-operad. Indeed, on the description
of an element as a composition of generators, none of these questions can
easily be answered, and on the description as a fraction, only the
automorphism group can effortlessly be seen.

We also consider the similar closure properties for symmetric operads
(with actions of the symmetric groups) and obtain a symmetric analog
of the isomorphism between $\opFF_4$ and the operad of red and white
trees.
\bigskip

The article is organized as follows:

In Section 1, we briefly recall general facts about operads,
dendriform and tridendriform algebras.

In section 2, we recall two known operads on fractions, introduce the
operad of formal fractions, and describe two inclusions of formal
fractions in fractions.

In section 3, we describe the images of the chosen elements of the
Dendriform and Tridendriform operads in formal fractions and define
the operad $\opFF_4$ as the closure of these images. We then introduce
an operad $\opGR$ given by generators and relations, and proceed to
prove that it is isomorphic to $\opFF_4$, using rewriting techniques.

In section 4, we introduce an operad $\BWTS$ on the sets of red and
white trees, its composition maps being given by combinatorial
rules. We prove that this operad is isomorphic to the operad $\opGR$
by comparing their generating series. We then prove the main theorem,
which states that all three operads $\opFF_4$, $\opGR$ and $\BWTS$ are
isomorphic.

In section 5, we use the previous construction to consider various
sub-operads generated by some subsets of the four chosen generators.

In section 6, we extend some of the previous results to the closure as
symmetric operads (instead of non-symmetric operads). In particular,
we obtain a symmetric analog of the isomorphism between $\opFF_4$ and
$\BWTS$.

In section 7, we sketch, mostly without proofs, an extension of all
this work to a set-operad on $6$ generators inside a more general kind
of formal fractions and its relation with a more general kind of red
and white trees.

\subsubsection*{Acknowledgment} This research was partially supported by projet
ANR-12-BS01-0017. The authors thank the Centro di Giorgi (Pise) for its
hospitality. This research was driven by computer exploration, using the
open-source mathematical software \texttt{Sage}~\cite{sage} and its algebraic
combinatorics features developed by the \texttt{Sage-Combinat}
community~\cite{Sage-Combinat}.

\section{Background}

\subsection{Operads}
\label{sec.operads}

We will consider in this article various kinds of operads. Let us fix
our terminology.

First, we will use the word \textit{operad} to mean a non-symmetric
operad, and otherwise talk of \textit{symmetric operad}.

An operad $P$ in a monoidal category with tensor product $\otimes$ is
a collection of objects $P(n)$ for integers $n \geq 1$, endowed with
composition maps $\circ_i$ from $P(m)\otimes P(n) \to P(m+n-1)$ for
all integers $m, n \geq 1$ and $ 1 \leq i \leq m$ satisfying
appropriate associativity axioms. One also requires a unit in
$P(1)$. The detailed definition can be found in many references, for
example \cite{}.

Symmetric operads are slightly more complex structures. A symmetric
operad can be defined as a collection $P(n)$ with an action of the
symmetric group $S_n$ on $P(n)$, these actions being moreover
compatible in the appropriate sense with the composition maps. An
alternative definition can be given using the language of species
\cite{Specie}: a symmetric operad is then a species $P$ and natural
composition maps $\circ_i$ from $P(I)\otimes P(J) \to P(I \setminus
\{i\} \sqcup J)$ for all finite sets $I$ and $J$ and every element $i
\in I$.

Almost all operads that will be considered are operads in the category
of sets endowed with the cartesian product. They will sometimes be
called \textit{set-operads} to avoid ambiguity.

\subsection{The dendriform and tridendriform operads}
\label{sec.dend.tridend}

The notion of a dendriform algebra has been introduced by Loday, in a
sequence of articles involving several other new kinds of algebras,
including Leibniz algebras. A dendriform algebra is an associative
algebra where the associative product $\odot$ can be written as a sum
of two bilinear operations:
\begin{equation}
  x \odot y = x \prec y + x \succ y,
\end{equation}
in such a way that $\prec$ and $\succ$ satisfy three
axioms. Conversely, these three axioms on the operations $\prec$ and
$\succ$ imply that the $\odot$ product is associative. Loday has
described the free dendriform algebras, and therefore the dendriform
operad, using planar binary trees. For more details, the reader may
consult \cite{LR}.

The notion of tridendriform algebra is a variation on the same idea,
where the associative product is cut into three pieces
\begin{equation}
  x \odot y = x \prec y + x \miltrid y + x \succ y,
\end{equation}
in such a way that $\miltrid$ is associative and $\prec, \miltrid$ and
$\succ$ satisfy $6$ other axioms. The free algebras are then described
by planar trees instead of planar binary trees. For more details, see
for example \cite{LRtri}.

\section{The operads of formal fractions}

Inspired by \'Ecalle's mould calculus \cite{eca2002}, the first author
defined in~\cite{Ch} an operad structure $\opMould0$ on the vector
spaces
\begin{equation}
  \opMould0(n) := \QQ(u_1, \dots, u_n)
\end{equation}
of rational fractions in the variables $\{u_1, \dots, u_n\}$.

The composition is defined for $F\in\opMould0(m)$ and $G\in\opMould0(n)$ by
\begin{equation}
\label{def.composition.mould0}
  F \circ_i G := S_{i,n}\,
  F(u_1, \dots, u_{i-1}, S_{i,n}, u_{i+1},\dots,u_{m+n-1})\,
  G(u_i, \dots, u_{i+n-1})\,
\end{equation}
where $S_{i,n}=u_i+u_{i+1}+\dots +u_{i+n-1}$. It was proved
in~\cite{Ch} that the sub-operad of $\opMould0$ generated by the
fractions $\frac{1}{u_1(u_1+u_2)}$ and $\frac{1}{u_2(u_1+u_2)}$ is
isomorphic to the dendriform operad.

The natural action on the symmetric groups endows $\opMould0$ with a
symmetric operad structure. The symmetric sub-operad generated by
$\frac{1}{u_1(u_1+u_2)}$ is isomorphic to the Zinbiel
operad~\cite{CHNT}.

A very similar operad called $\opMould1$, over the same vector spaces,
has been defined in ~\cite{MNT}. In $\opMould1$, the composition is
defined by
\begin{equation}
\label{def.composition.mould1}
  F \circ_i G := (P_{i,n} - 1)\,
  F(u_1, \dots, u_{i-1}, P_{i,n}, u_{i+1},\dots,u_{m+n-1})\,
  G(u_i, \dots, u_{i+n-1})\,
\end{equation}
where $P_{i,n}=u_i u_{i+1}\dots u_{i+n-1}$. The operad $\opMould1$
contains the tridendriform operad, as the sub-operad generated by
\begin{equation}
\frac{1}{(u_1-1)(u_1u_2-1)},\quad \frac{1}{(u_2-1)(u_1u_2-1)},
\quad \text{and\ } \frac{1}{u_1u_2-1}.
\end{equation}
It is also a symmetric operad and its symmetric sub-operad generated
by $\frac{1}{(u_1-1)(u_1u_2-1)}$ and $\frac{1}{u_1u_2-1}$ is called
CTD\footnote{standing for Commutative TriDendriform} by Loday \cite{LodQS}.

Interestingly, these two operads are particular cases of a family of operads
indexed by a parameter $\lambda$ defined by Loday in \cite{Lod10} and denoted
by $\opRatFct{\lambda}$. He showed that $\opMould0$ and $\opRatFct{0}$ are
isomorphic. In \cite{MNT}, it is showed that $\opMould1$ and $\opRatFct{1}$
are isomorphic too.

\subsection{Formal fractions}

Let us denote by $\SET{\mathcal{O}}$, the set-operad obtained from an
operad $\mathcal{O}$ by forgetting about its linear structure. In the
present paper, we deal with some sub-operads of $\SET{\opMould0}$ and
$\SET{\opMould1}$. It will be handy, as an intermediate tool, to
encode all computations by means of a common sub-operad of both,
namely the operad of formal fractions $\opFF$.

\medskip
Let $\opFF(n)$ be the set of fractions whose numerator and denominator are
products of formal symbols $[S]$ where $S$ is any non-empty subset of
$\{1,\dots,n\}$. As usual, we simplify the fraction if the same symbol appears
on top and bottom. For readability, $[\{1,3,5,6\}]$ is written $[1356]$. The
empty product is denoted by $1$, which should not be confused with the symbol
$[1]$.

For $F\in\opFF(n)$ and $i_1,\dots,i_n$ integers, we write $F(i_1,\dots,i_n)$
the fraction where $k$ is replaced by $i_k$. We extend naturally the
definition to the case where $i_k$ is itself a set by taking union. For
example,
\begin{equation}
\frac{[1][34]}{[13][2]}(2,5,6,9) = \frac{[2][69]}{[26][5]}
\qandq
\frac{[1][34]}{[13][2]}(2,5,\{6,8\},9) = \frac{[2][689]}{[268][5]}\,.
\end{equation}

Let $F\in\opFF(m)$ and $G\in\opFF(n)$. Define composition
$F\circ_i G\in\opFF(m+n-1)$ by
\begin{equation}
\label{def.composition.ff}
  F\circ_i G := [S_{i,n}]\,
  F(u_1, \dots, x_{i-1}, S_{i,n}, u_{i+1},\dots,u_{m+n-1})\,
  G(u_i, \dots, u_{i+n-1})\,
\end{equation}
where $S_{i,n} := \{i,i+1,\dots i+n-1\}$.

For example, using
\begin{gather}
  \frac{[123]}{[1234][12][2][3]}(1,\{2,3,4,5\},6,7) =
  \frac{[123456]}{[1234567][12345][2345][6]} \,,\\
  \frac{1}{[12][34]}(2,3,4,5) = \frac{1}{[23][45]}\,, \\
\intertext{one finds that}
  \frac{[123]}{[1234][12][2][3]} \circ_2 \frac{1}{[12][34]} =
  \frac{[123456]}{[1234567][12345][6][23][45]}\,.\\
\intertext{Similarly,}
  \frac{[123]}{[1234][12][2][3]} \circ_1 \frac{1}{[12][34]} =
  \frac{[123456][1234]}{[1234567][12345][5][6][12][34]}\,.
\end{gather}

\begin{proposition}
  \begin{enumerate}
  \item The family $\opFF := (\opFF(n))_{n\in\NN}$ together with the
    compositions $\circ_i$ is a set-operad.

  \item The map $\phi^0$ sending $[S]$ to $\sum_{i\in S} u_i$ and
    formal fractions to fractions is an injective morphism of
    set-operads from $\opFF$ to $\SET{\opMould0}$.

  \item The map $\phi^1$ sending $[S]$ to $\left(\prod_{i\in S}
      u_i\right)-1$ and formal fractions to fractions is an injective
    morphism of set-operads from $\opFF$ to $\SET{\opMould1}$.
  \end{enumerate}
\end{proposition}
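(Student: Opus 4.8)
The plan is to prove the three parts in a slightly different order, since as stated parts (2) and (3) presuppose part (1) (for ``morphism of set-operads'' to make sense). I would first establish, as plain identities of rational fractions that do not yet use any operad structure on $\opFF$, the two \emph{compatibility relations} $\phi^\epsilon(F\circ_i G)=\phi^\epsilon(F)\circ_i\phi^\epsilon(G)$ for $\epsilon\in\{0,1\}$, $F\in\opFF(m)$, $G\in\opFF(n)$, where on the left $\circ_i$ is \eqref{def.composition.ff} and on the right it is \eqref{def.composition.mould0} resp.\ \eqref{def.composition.mould1}. Since $\phi^0$ and $\phi^1$ are multiplicative, it suffices to match the three factors of each formula symbol by symbol. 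The prefactors agree because $\phi^0([S_{i,n}])=u_i+\cdots+u_{i+n-1}=S_{i,n}$ and $\phi^1([S_{i,n}])=u_iu_{i+1}\cdots u_{i+n-1}-1=P_{i,n}-1$. For the relabelling step, the one thing to observe is that the substitution in \eqref{def.composition.ff} replaces each argument $k$ of $F$ by a block $A_k$, with $A_k=\{k\}$ for $k<i$, $A_i=\{i,\dots,i+n-1\}$, and $A_k=\{k+n-1\}$ for $k>i$, and that these blocks are \emph{pairwise disjoint}. Hence for every symbol $[S]$ the set $\bigcup_{k\in S}A_k$ is a disjoint union, so $\phi^0$ (a sum over the support) and $\phi^1$ (a product over the support, minus one) commute with the substitution. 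This disjointness is really the only computation to carry out in this step.

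Next I would prove the injectivity of $\phi^0$ and $\phi^1$. In both cases the point is that the polynomials $\{\phi^\epsilon([S]):\emptyset\ne S\subseteq\{1,\dots,n\}\}$ are pairwise non-associate irreducibles of the UFD $\QQ[u_1,\dots,u_n]$. For $\phi^0$ this is immediate: $\sum_{i\in S}u_i$ is a nonzero linear form, hence irreducible, and two such forms are proportional only when the sets coincide. For $\phi^1$ the irreducibility of $\prod_{i\in S}u_i-1$ is the main technical step: choosing any $i_0\in S$ and viewing it in $\QQ[u_j:j\ne i_0][u_{i_0}]$, it is a primitive polynomial of degree one (leading coefficient the monomial $\prod_{i\in S\setminus\{i_0\}}u_i$, constant term $-1$), hence irreducible by Gauss's lemma; comparing constant terms shows two of them are associate only when the sets coincide. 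Because formal fractions are kept reduced (no symbol both on top and bottom), the image of a formal fraction is a quotient of coprime products of these irreducibles, and unique factorization recovers the numerator and denominator multisets of symbols. Hence $\phi^0$ and $\phi^1$ are injective.

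Finally I would assemble the statement. The unit is $e:=\frac{1}{[1]}$, and $\phi^0(e)=1/u_1$, $\phi^1(e)=1/(u_1-1)$ are exactly the units of $\opMould0$ and $\opMould1$. By the compatibility relation the image $\phi^0(\opFF)$ is closed under the composition of $\opMould0$ and contains this unit, so it is a sub-operad of $\SET{\opMould0}$, and by injectivity $\phi^0$ is a bijection onto it. Each operad axiom (the two associativity relations and the unit relations) is an equality between iterated composites, so applying $\phi^0$ and moving it across every $\circ_i$ via the compatibility relation reduces it to the corresponding axiom in $\opMould0$, which holds; injectivity then returns the identity in $\opFF$, which proves part (1). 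Parts (2) and (3) are then precisely the compatibility relations together with injectivity, now legitimately read as statements about injective morphisms of set-operads. The only genuine obstacle is the irreducibility of $\prod_{i\in S}u_i-1$ used for the injectivity of $\phi^1$; the compatibility relations reduce to the disjointness bookkeeping above, and part (1) is a formal transport of structure.
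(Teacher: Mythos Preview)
Your proof is correct and follows the same route as the paper: establish that $\phi^0$ and $\phi^1$ commute with all compositions and are injective, then deduce the operad axioms for $\opFF$ by transport of structure along either injection. The paper's own proof is a two-line assertion that these properties are ``clear from the definitions,'' so your version is simply a more detailed execution of the same argument (in particular, your Gauss's-lemma verification of the irreducibility of $\prod_{i\in S}u_i-1$ is the one point the paper leaves entirely to the reader).
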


\begin{proof}
  It is clear from the definitions of compositions in Equations
  \eqref{def.composition.mould0}, \eqref{def.composition.mould1} and
  \eqref{def.composition.ff} that both maps $\phi^0$ and $\phi^1$ are
  injective and commute with all compositions. Therefore $\opFF$ itself is an
  operad.
\end{proof}

The symmetric groups act naturally on $\opMould0$, $\opMould1$ and
$\opFF$, endowing these with symmetric operad structures. We shall
denote these operads by adding a $\Sigma$ as exponent.
\begin{proposition}
  The maps $\phi^0$ and $\phi^1$ are respectively injective morphisms from
  $\SYMOP\opFF$ to $\SYMOP{\opMould0}$ and $\SYMOP{\opMould1}$.
\end{proposition}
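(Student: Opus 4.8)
The plan is to lean on the first Proposition, which already shows that $\phi^0$ and $\phi^1$ are injective and commute with every composition map $\circ_i$; the only additional content needed to promote them to morphisms of \emph{symmetric} operads is their equivariance with respect to the $\SG_n$-actions. Injectivity is a property of the underlying maps and is inherited verbatim, so no new argument is required there.

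First I would pin down the two actions with matching conventions. The symmetric group $\SG_n$ acts on $\opMould{\epsilon}(n)=\QQ(u_1,\dots,u_n)$ by the variable substitution $\sigma\cdot F = F(u_{\sigma(1)},\dots,u_{\sigma(n)})$, which is a field automorphism (and a genuine left action, since substituting $u_i\mapsto u_{\sigma(i)}$ then $u_j\mapsto u_{\tau(j)}$ sends $u_i\mapsto u_{(\tau\sigma)(i)}$). On $\opFF(n)$, the same group acts by relabeling the contents of the formal symbols, $\sigma\cdot[S]=[\sigma(S)]$, extended multiplicatively to products and fractions. The crucial structural fact is that both $\phi^\epsilon$ and the action of a fixed $\sigma$ are multiplicative: $\phi^\epsilon$ is defined on the generators $[S]$ and extended as a morphism of the multiplicative monoid of formal fractions, while $\sigma$ acts by a ring automorphism. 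It therefore suffices to verify equivariance on a single symbol $[S]$, the general case following by multiplicativity.

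For $\phi^0$ the verification is the direct computation
\begin{equation}
\phi^0(\sigma\cdot[S]) = \phi^0([\sigma(S)]) = \sum_{i\in\sigma(S)}u_i
= \sum_{j\in S}u_{\sigma(j)} = \sigma\cdot\Bigl(\sum_{j\in S}u_j\Bigr)
= \sigma\cdot\phi^0([S]),
\end{equation}
and the analogous computation with $\prod$ in place of $\sum$ handles $\phi^1$, the constant $-1$ in $\phi^1([S])=\bigl(\prod_{i\in S}u_i\bigr)-1$ being fixed by any variable substitution. This establishes equivariance on generators, hence everywhere, and combined with the composition-compatibility and injectivity of the first Proposition it yields that $\phi^0$ and $\phi^1$ are injective morphisms from $\SYMOP\opFF$ to $\SYMOP{\opMould0}$ and $\SYMOP{\opMould1}$. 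I expect the only delicate point to be bookkeeping the direction of the action, so that $\sum_{i\in\sigma(S)}u_i = \sigma\cdot\sum_{j\in S}u_j$ holds for the chosen convention rather than its inverse; once the conventions on the source and the target are fixed consistently (as above), the identity is immediate and the remainder is routine.
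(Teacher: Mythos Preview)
Your argument is correct. The paper itself gives no proof for this proposition, presumably regarding it as immediate from the preceding proposition together with the evident equivariance of the maps; your write-up supplies exactly that natural argument, checking $\SG_n$-equivariance on the generating symbols $[S]$ and extending by multiplicativity.
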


\section{Dendriform and tridendriform operads in formal fractions}

Recall that according to~\cite{Ch}, the map $\succ\ \mapsto\ \frac{1}{u_1(u_1+u_2)}$
and $\prec\ \mapsto\ \frac{1}{u_2(u_1+u_2)}$ is an injective morphism from the
dendriform operad to $\opMould0$. Note that by definition
\begin{equation}
\phi_0\left(\frac{1}{[1][12]}\right)=\frac{1}{u_1(u_1+u_2)}
\qandq
\phi_0\left(\frac{1}{[2][12]}\right)=\frac{1}{u_2(u_1+u_2)}\,.
\end{equation}

Therefore, the sub-operad of $\SET\opDend$ generated by $\succ$ and
$\prec$ is isomorphic to the sub-operad of $\opFF$ generated by
$\frac{1}{[1][12]}$ and $\frac{1}{[2][12]}$. Moreover, the associative product
$\prec + \succ$ of $\opDend$ is associated to
\begin{equation}
  \frac{1}{u_1(u_1+u_2)} + \frac{1}{u_2(u_1+u_2)} = \frac{1}{u_1u_2}
  = \phi_0\left(\frac{1}{[1][2]}\right)\,,
\end{equation}
and therefore also lives in $\opFF$.

In a similar way, according to~\cite{MNT}, the tridendriform operad is a
sub-operad of $\opMould1$ via the map
\begin{alignat}{2}
\label{def_phi1_sup}
\succ    \ \mapsto\ &\frac{1}{(u_1-1)(u_1u_2-1)} &&= \phi_1\left(\frac{1}{[1][12]}\right)\,,\\
\label{def_phi1_inf}
\prec    \ \mapsto\ &\frac{1}{(u_2-1)(u_1u_2-1)} &&= \phi_1\left(\frac{1}{[2][12]}\right)\,,\\
\label{def_phi1_circ}
\circ\ \mapsto\ &\frac{1}{(u_1u_2-1)}    &&= \phi_1\left(\frac{1}{[12]}\right)\,.\\
\intertext{Moreover}
\label{def_phi1_sum}
(\prec + \circ + \succ)\ \mapsto\ &\frac{1}{(u_1-1)(u_2-1)}    &&= \phi_1\left(\frac{1}{[1][2]}\right)\,.
\end{alignat}
So we can study both the dendriform and tridendriform cases by studying the
case of formal fractions:
\begin{definition}
  We denote by $\opFF_4$ the sub-set-operad of $\opFF$ generated by
  the fractions
  \begin{equation}
    F_{\droittrid}:=\frac{1}{[1][12]},\quad
    F_{\gautrid}:=\frac{1}{[2][12]},\quad
    F_{\miltrid}:=\frac{1}{[12]},\quad
    F_{\odot}:=\frac{1}{[1][2]}\,.
  \end{equation}
\end{definition}

The main goal of this paper is to understand $\opFF_4$ and several of its
sub-operads.

\subsection{Generators and relations}

The first problem is to determine the relations between the four generators.
As we shall see, it turns out that the relations are all in degree $2$.
They are
\begin{equation}
\begin{array}{>{\displaystyle}r@{\ =\ }>{\displaystyle}c@{\ =\ }>{\displaystyle}l}
\frac{1}{[12] [2]} \circ_{1} \frac{1}{[12] [1]} & \frac{1}{[123] [1] [3]} & \frac{1}{[12] [1]} \circ_{2} \frac{1}{[12] [2]}\\[4mm]
\frac{1}{[12]} \circ_{1} \frac{1}{[12]} & \frac{1}{[123]} & \frac{1}{[12]} \circ_{2} \frac{1}{[12]}\\[4mm]
\frac{1}{[12]} \circ_{1} \frac{1}{[12] [1]} & \frac{1}{[123] [1]} & \frac{1}{[12] [1]} \circ_{2} \frac{1}{[12]}\\[4mm]
\frac{1}{[12]} \circ_{1} \frac{1}{[12] [2]} & \frac{1}{[123] [2]} & \frac{1}{[12]} \circ_{2} \frac{1}{[12] [1]}\\[4mm]
\frac{1}{[12] [2]} \circ_{1} \frac{1}{[12]} & \frac{1}{[123] [3]} & \frac{1}{[12]} \circ_{2} \frac{1}{[12] [2]}\\[4mm]
\frac{1}{[1] [2]} \circ_{1} \frac{1}{[1] [2]} & \frac{1}{[1] [2] [3]} & \frac{1}{[1] [2]} \circ_{2} \frac{1}{[1] [2]}\\[4mm]
\frac{1}{[12] [2]} \circ_{1} \frac{1}{[12] [2]} & \frac{1}{[123] [2] [3]} & \frac{1}{[12] [2]} \circ_{2} \frac{1}{[1] [2]}\\[4mm]
\frac{1}{[12] [1]} \circ_{1} \frac{1}{[1] [2]} & \frac{1}{[123] [1] [2]} & \frac{1}{[12] [1]} \circ_{2} \frac{1}{[12] [1]}\\[4mm]
\end{array}
\end{equation}
Note that they correspond to the tridendriform relations:
\begin{equation}
  \begin{aligned}
    (x\droittrid y)\gautrid z &= x\droittrid (y\gautrid z)\,,\\
    (x\miltrid y)\miltrid z &= x\miltrid (y\miltrid z)\,,\\
    (x\droittrid y)\miltrid z &= x\droittrid (y\miltrid z)\,,\\
    (x\gautrid y)\miltrid z &= x\miltrid (y\droittrid z)\,,\\
    (x\miltrid y)\gautrid z &= x\miltrid (y\gautrid z)\,,\\
    (x\odot y)\odot z &= x\odot (y\odot z)\,,\\
    (x\gautrid y)\gautrid z &= x\gautrid (y\odot z)\,,\\
    (x\odot y)\droittrid z &= x\droittrid (y\droittrid z)\,.
  \end{aligned}
\end{equation}
To show that these are the only relations of $\opFF_4$, we need to
consider the quotient of the free set-operad on
$\{\gautrid,\, \droittrid,\, \miltrid,\, \odot\}$ by these relations. Recall
that the free operad on a set $G$ of binary generators is the set of
binary trees with nodes labelled by the elements of $G$. To simplify
the notations, when drawing a tree we never write the leaves of the
tree: for example, the following trees are equal and we shall draw
the first one in the rest of the paper:

\begin{equation}
\prec\ \circ_1\ \odot
\ =\ 
\begin{tikzpicture}
\node [root] {$\prec$}
  child {
    node [white] {$\odot$}
      child [missing]
      child [missing]
  }
  child [missing];
\end{tikzpicture}
\ =\ 
\begin{tikzpicture}
\node [root] {$\prec$}
  child {
    node [white] {$\odot$}
      child {}
      child {}
  }
  child {};
\end{tikzpicture}
\end{equation}

\begin{definition}
  Let us denote by $\opGR$ the quotient of the free set-operad $\opFree$
  generated by $\{\treeNode{\gautrid}$, $\treeNode{\droittrid}$,
  $\treeNode{\miltrid}$, and $\treeNode{\odot}\}$ by the relations:
  \begin{equation}
    \label{eq.def.GR}
    \begin{aligned}
      \treeLft{\prec}{\succ}\ &=\ \treeRgt{\succ}{\prec}
      \\[5mm]
      \treeLft{\circ}{\circ}\ &=\ \treeRgt{\circ}{\circ}
      &
      \treeLft{\circ}{\succ}\ &=\ \treeRgt{\succ}{\circ}
      &
      \treeLft{\circ}{\prec}\ &=\ \treeRgt{\circ}{\succ}
      &
      \treeLft{\prec}{\circ}\ &=\ \treeRgt{\circ}{\prec}
      \\[5mm]
      \treeLft{\odot}{\odot}\ &=\ \treeRgt{\odot}{\odot}
      &
      \treeLft{\prec}{\prec}\ &=\ \treeRgt{\prec}{\odot}
      &
      \treeLft{\succ}{\odot}\ &=\ \treeRgt{\succ}{\succ}
    \end{aligned}
  \end{equation}
\end{definition}
The next paragraphs are devoted to the proof that $\opGR$ is
isomorphic to $\opFF_4$.

\subsection{Canonical trees}
\label{subsection:canonical}

Let us begin our study of $\opGR$ by computing the generating series of its
cardinalities. This is done using rewriting theory.

We start by choosing a tree in each equivalence class modulo the relation:
\begin{definition}
  We say that a tree in $\opFree$ is \emph{canonical} if it avoid all
  patterns on the right-hand side of each of the
  Relations~\eqref{eq.def.GR}.
\end{definition}

\begin{lemma}
  Each equivalence class modulo Relations~\eqref{eq.def.GR} contains at least 
  one canonical tree.
\end{lemma}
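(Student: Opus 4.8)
The plan is to turn Relations~\eqref{eq.def.GR} into a terminating rewriting system and then read off the lemma from termination. I would orient each relation from right to left, that is, as a rule that replaces an occurrence of a right-hand pattern by the corresponding left-hand pattern. Concretely, every right-hand side is of the form $a\circ_2 b$ (a node whose lower neighbour is attached as its \emph{right} child), while every left-hand side is of the form $a'\circ_1 b'$ (the two nodes reorganized so that the lower one becomes a \emph{left} child), the labels being reassigned as prescribed by the relation. Thus each rule, applied at an internal edge inside an arbitrary tree of $\opFree$, performs a single left tree-rotation together with a relabelling of the two nodes involved. By definition a tree is canonical exactly when no rule applies, i.e. when it is a normal form. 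Since each rule is an instance of a defining relation, any reduction keeps us inside the same equivalence class, so it suffices to prove that no infinite chain of reductions exists.

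To prove termination I would exhibit a shape-based monovariant. For a tree $T$ write $|T|$ for its number of internal nodes and set $\Phi(T)=\sum_v r(v)$, where $v$ ranges over the internal nodes of $T$ and $r(v)$ is the number of internal nodes in the right subtree of $v$. Note that $\Phi$ depends only on the planar shape of $T$, not on the labels, which is exactly what makes it handle all eight rules at once. Consider one reduction at an edge, with $u$ the upper node, $w$ its right child, $X$ the left subtree of $u$, and $Y,Z$ the left and right subtrees of $w$. Before the step the two affected nodes contribute $r(u)+r(w)=(1+|Y|+|Z|)+|Z|$; after the left-rotation, $w$ becomes the parent with right subtree $Z$ and $u$ becomes its left child with right subtree $Y$, so they contribute $|Z|+|Y|$. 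Every other node of $T$, as well as the internal structure of $X$, $Y$ and $Z$, is untouched, and the whole rotated region keeps the same number of internal nodes, so the contributions of all ancestors are unchanged. Hence $\Phi$ drops by exactly $1+|Z|\ge 1$ at each step.

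Since $\Phi$ takes values in $\NN$ and strictly decreases under every reduction, every chain of reductions is finite and terminates at a tree to which no rule applies, that is, a canonical tree lying in the same class as the starting tree. I expect the only point requiring care to be the uniform shape computation above; once one checks that all eight rules induce the very same local shape change (a left-rotation), a single monovariant settles every case simultaneously. Note that this argument proves only \emph{existence} of a canonical representative, which is all the lemma asks; uniqueness (confluence of the system) is a separate matter to be treated afterwards.
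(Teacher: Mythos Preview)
Your proposal is correct and follows essentially the same approach as the paper: orient the relations right-to-left, observe that each rewriting is a left rotation of the underlying shape, and use the monovariant $\Phi(T)=\sum_v r(v)$ (the paper's $f$) to force termination. The paper is terser---it simply asserts that this sum strictly decreases---while you compute the exact drop $1+|Z|$; but the idea is the same, and the paper even remarks that this is the classical invariant for the Tamari order.
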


\begin{proof}
  Consider a tree $T$.
  We want to show that there is a canonical tree in the class of $T$.

  If it avoids all patterns, the property is true; otherwise,
  replace any pattern by its (left-hand side) image. If one denotes by
  $f$ the function that associates with a tree the sum of the cardinality of
  the right sub-tree of each node, then $f$ strictly decreases from $T$ to the
  new tree $T'$, hence proving that the process stops after a certain number
  of steps.
\end{proof}

Note that $f$ is the classical invariant that shows that the Tamari order is
anti-symmetric.
\medskip

\begin{definition}
  We say that an edge of $T$ is \emph{rewritable} if its extremities belong to
  one of the relations above, either on the left or on the right.
\end{definition}

So if one changes a rewritable edge into its rewriting thanks to the
relations, it remains rewritable by definition. Then one can check that for
all $5$ shapes of binary trees with $3$ nodes, there exists exactly $16$ trees
whose two edges are rewritable. Using the rewriting relations, we can group
those trees into $16$ pentagons of equivalent trees where each shape appears
exactly once and each edge is rewritable. Here is an example:
\begin{equation}
\begin{tikzpicture}
\node at (378:3) (a) [inner sep=5pt, fill=white, rounded corners]{
\begin{tikzpicture}
\node [root]{$\circ$}
  child {
    node [white] {$\prec$}
    child {
      node [white] {$\prec$}
    }
    child [missing]
  }
  child [missing];
\end{tikzpicture}};
\node at (90:3) (b) [inner sep=5pt, fill=white, rounded corners] {
\begin{tikzpicture}
  \node [root] {$\circ$}
  child {node [white] {$\prec$}}
  child {node [white] {$\succ$}};
\end{tikzpicture}};
\node at (162:3) (c) [inner sep=5pt, fill=white, rounded corners] {
\begin{tikzpicture}
  \node [root]{$\circ$}
  child [missing]
  child {
    node [white] {$\succ$}
    child [missing]
    child {
      node [white] {$\succ$}
    }
  };
\end{tikzpicture}};
\node at (234:3) (d) [inner sep=5pt, fill=white, rounded corners] {
\begin{tikzpicture}
  \node [root]{$\circ$}
  child [missing]
  child {
    node [white] {$\succ$}
    child {
      node [white] {$\odot$}
    }
    child [missing]
  };
\end{tikzpicture}};
\node at (306:3) (e) [inner sep=5pt, fill=white, rounded corners] {
\begin{tikzpicture}
  \node [root]{$\circ$}
  child {
    node [white] {$\prec$}
    child [missing]
    child {
      node [white] {$\odot$}
    }
  }
  child [missing];
\end{tikzpicture}};
\foreach \from/\to in {a/b,b/c,c/d,d/e,e/a}
    \draw [double distance = 3pt, line cap=none] (\from) -- (\to);
\end{tikzpicture}
\end{equation}

Note that each pentagon contains exactly one canonical tree. As a
consequence, there are only three kinds of equivalence classes of
trees with three nodes: singleton (tree with no rewriting), pairs
(trees with only one rewriting), and pentagons (trees with two
rewritings). This has the fundamental consequence that rewriting an
edge in any tree does not create new rewritable edges. We can therefore
prove that:
\begin{lemma}
  Each equivalence class modulo Relations~\eqref{eq.def.GR} contains exactly
  one canonical tree.
\end{lemma}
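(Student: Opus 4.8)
The plan is to deduce uniqueness from confluence via Newman's lemma (the diamond lemma for terminating relations). The previous lemma already gives termination: the statistic $f$ strictly decreases under each rewriting, so every tree reduces after finitely many steps to a canonical tree, which is exactly a \emph{normal form}, i.e. a tree to which no rewriting applies. For a terminating system, Newman's lemma says that \emph{local} confluence implies confluence, and a terminating confluent system has unique normal forms; hence any two trees in the same equivalence class reduce to the same canonical representative, and there can be only one. It therefore suffices to prove local confluence: whenever a tree $T$ rewrites in one step to two distinct trees $T_1$ and $T_2$, there is a common tree $T_3$ reachable from both by further rewritings.

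Each one-step rewriting acts on a single rewritable edge, that is, on the two adjacent nodes it joins together with their three attached subtrees. So two distinct rewritings of $T$ are carried out on two rewritable edges $e_1$ and $e_2$, and I split into two cases. If $e_1$ and $e_2$ share no node, the minimal subtrees they affect are disjoint: each relation only rearranges the two nodes of its own edge and reattaches the same subtrees, so the region around $e_2$ is left intact by the rewriting at $e_1$, and conversely. The two rewritings then commute, and performing the other one after each of $T_1$ and $T_2$ reaches a common $T_3$. Here I rely on the already noted fact that rewriting an edge creates no new rewritable edges, which guarantees that reducing $e_1$ neither destroys nor displaces the redex at the far-away edge $e_2$.

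The substantial case — and the reason the three-node analysis was carried out beforehand — is when $e_1$ and $e_2$ share a node. The two edges and the three nodes they involve then form a tree with exactly three nodes whose two edges are both rewritable, which is precisely the configuration classified above. By that classification such a configuration lies in one of the $16$ pentagons, in which the five possible shapes are cyclically connected by rewritings and which contains exactly one canonical tree. Rewriting $e_1$ and rewriting $e_2$ produce two vertices of this pentagon, and following its edges drives both of them to the single canonical tree of the pentagon; grafting this back into the untouched surrounding tree yields the required common descendant $T_3$. This resolves every overlapping critical pair, and the fact that rewriting never produces new rewritable edges guarantees that no overlaps other than these three-node ones can arise, so there are no further obstructions. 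With local confluence established, Newman's lemma completes the proof.
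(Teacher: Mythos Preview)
Your proof is correct and follows essentially the same route as the paper: termination from the decreasing statistic $f$, local confluence by a case split on whether the two redexes share a node (commuting rewrites in the disjoint case, the precomputed pentagons in the overlapping case), and then confluence. The only cosmetic difference is that you name Newman's lemma explicitly, while the paper simply asserts ``locally confluent and hence confluent''; the underlying argument is the same.
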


\begin{proof}
  We claim that given tree $T$, the number of rewritable edges does not change
  if one rewrites a given edge. It is obvious for a rewritable edge that has
  no node in common with any other rewritable edge. Thanks to the pentagonal
  relations, this is also true if one rewrites an edge adjacent to another
  one. Now, if two rewritable edges do not share a node, one can rewrite both
  in any order and obtain the same final tree.
  If two rewritable edges share a node, the pentagonal relations show that if
  one rewrites any of the two edges, one will always end ultimately with the
  same tree. Otherwise said, the rewriting system is \emph{locally confluent}
  and hence confluent.

  Hence, one can do the rewritings in any order, the resulting tree is always
  the same. So there only is one tree that avoids all patterns on the
  right-hand sides in each class.
\end{proof}

\begin{note}
  Note that two different trees of the same shape are necessarily in
  different classes. Otherwise, one could find two sequences of
  rewritings starting from these trees and leading to the same
  canonical form. Using the pentagons (and squares), one can assume
  that the underlying sequences of rotations of trees are the
  same. But then the sequences of rewritings are the same, hence the
  trees are the same, which is absurd.

  Moreover, all shapes of trees appearing in a class form an interval
  of the Tamari lattice (thanks to the observation about rewritable
  edges) and the representative of a class is the tree closest to the
  left comb.
\end{note}




Finally, note that thanks to the existence of simple canonical trees,
one easily obtains an equation satisfied by the generating series of
the cardinalities of the operad.  Indeed, the eight relations
\eqref{eq.def.GR} when oriented provide the following relations. Let
us denote by $F$, $l$, $m$, $r$, $s$ respectively the generating
series of all canonical trees, all canonical trees having $\prec$,
$\circ$, $\succ$, and $\odot$ as their root.

We then get
\begin{equation}
  \label{systeme_gr4}
  \left\{
    \begin{aligned}
      F &= l + m + r + s + 1, \\
      l &= x \,F\,(l+m+r+1),  \\
      m &= x \,F\,(s+1),      \\
      r &= x \,F\,(s+1),      \\
      s &= x \,F\,(l+m+r+1).  \\
    \end{aligned}
  \right.
\end{equation}

Then, summing the last four equations, one finds
\begin{equation}
  F-1 = x\,F\,2(F+1),
\end{equation}
which is also
\begin{equation}
  \label{eq_sg_CT}
  2x\,F^2 + (2x-1)F + 1 = 0.
\end{equation}

In the system of equations \eqref{systeme_gr4}, binary trees are
counted according to the number of internal nodes, whereas in the
context of operads, it is more natural to count trees by the number of
leaves. To compare Equation \eqref{eq_sg_CT} with Equation
\eqref{eq_sg_TS}, it is therefore necessary to replace $F$ by $F/x$ in
the former.

We shall show in section \ref{all_three_are_isomorphic} that this
generating series is also the generating series of $\opFF_4$. We first
need a third operad isomorphic to these first two, based on trees.

\begin{note}
  The presentation given here is quadratic and confluent. By choosing
  an appropriate monomial order, this gives a quadratic Groebner
  basis. By a known result~\cite{Dotsenko,Hoffbeck}, this implies that
  $\opGR$ is Koszul.
\end{note}

\section{The operads of red and white trees}
\label{sec_op_red_white_trees}
\subsection{Red and white trees}

Let us consider topological rooted trees, that is, rooted trees with
no order on the children, so that each node can have either none, one,
or multiple dots in it. If a node has no dots, then it must have at
least two children. The weight of a tree is its number of dots. This
set of trees is known in~\cite{Sloane} as Sequence~A050381, except for
the first term that is not $2$ but $1$.

The first values are
\begin{equation}
  1, 3, 10, 40, 170, 785, 3770, 18805,  96180, 502381, 2667034, 14351775, \dots 
\end{equation}

We shall make use of a small variation on these trees: the nodes will
be colored either red or white following the simple rule: a nonempty
node is always white and an empty node is red if and only if all its
children are white. Since there is only one such way to color the
nodes, this set is obviously in bijection with the previous one and is
called the \emph{red and white trees} and denoted by $\BWU$.
The set $\BWU(n)$ is the set of $\BWU$ trees of weight $n$.
Depending on what we are discussing, we shall make use of the red and white
trees or of the non-colored version.

We shall represent all trees in the following way: the red nodes are
represented in red and the others are represented in light blue
\footnote{When viewed in black-and-white, red appears dark and blue
  appears light.}.


\[\left[\ \begin{tikzpicture}
\node [root] {$\scriptstyle (\bullet)$};
\end{tikzpicture}\ \right]\]

\[\left[\ \begin{tikzpicture}
\node [root] {$\scriptstyle (\bullet)$}
  child {
    node [white] {$\scriptstyle (\bullet)$}
  };
\end{tikzpicture}\, \begin{tikzpicture}
\node [root] {$\scriptstyle (\bullet\bullet)$};
\end{tikzpicture}\, \begin{tikzpicture}
\node [root]  [red] {$\scriptstyle ()$}
  child {
    node [white] {$\scriptstyle (\bullet)$}
  }
  child {
    node [white] {$\scriptstyle (\bullet)$}
  };
\end{tikzpicture}\ \right]\]


\[\left[\ \begin{tikzpicture}
\node [root] {$\scriptstyle (\bullet)$}
  child {
    node [white] {$\scriptstyle (\bullet)$}
      child {
        node [white] {$\scriptstyle (\bullet)$}
      }
  };
\end{tikzpicture}\, \begin{tikzpicture}
\node [root] {$\scriptstyle (\bullet)$}
  child {
    node [white] {$\scriptstyle (\bullet\bullet)$}
  };
\end{tikzpicture}\, \begin{tikzpicture}
\node [root] {$\scriptstyle (\bullet)$}
  child {
    node  [red] {$\scriptstyle ()$}
      child {
        node [white] {$\scriptstyle (\bullet)$}
      }
      child {
        node [white] {$\scriptstyle (\bullet)$}
      }
  };
\end{tikzpicture}\, \begin{tikzpicture}
\node [root] {$\scriptstyle (\bullet)$}
  child {
    node [white] {$\scriptstyle (\bullet)$}
  }
  child {
    node [white] {$\scriptstyle (\bullet)$}
  };
\end{tikzpicture}\, \begin{tikzpicture}
\node [root] {$\scriptstyle (\bullet\bullet\bullet)$};
\end{tikzpicture}\, \begin{tikzpicture}
\node [root] {$\scriptstyle (\bullet\bullet)$}
  child {
    node [white] {$\scriptstyle (\bullet)$}
  };
\end{tikzpicture}\, \begin{tikzpicture}
\node [root]  [red] {$\scriptstyle ()$}
  child {
    node [white] {$\scriptstyle (\bullet)$}
      child {
        node [white] {$\scriptstyle (\bullet)$}
      }
  }
  child {
    node [white] {$\scriptstyle (\bullet)$}
  };
\end{tikzpicture}\, \begin{tikzpicture}
\node [root]  [red] {$\scriptstyle ()$}
  child {
    node [white] {$\scriptstyle (\bullet\bullet)$}
  }
  child {
    node [white] {$\scriptstyle (\bullet)$}
  };
\end{tikzpicture}\, \begin{tikzpicture}
\node [root] {$\scriptstyle ()$}
  child {
    node  [red] {$\scriptstyle ()$}
      child {
        node [white] {$\scriptstyle (\bullet)$}
      }
      child {
        node [white] {$\scriptstyle (\bullet)$}
      }
  }
  child {
    node [white] {$\scriptstyle (\bullet)$}
  };
\end{tikzpicture}\, \begin{tikzpicture}
\node [root]  [red] {$\scriptstyle ()$}
  child {
    node [white] {$\scriptstyle (\bullet)$}
  }
  child {
    node [white] {$\scriptstyle (\bullet)$}
  }
  child {
    node [white] {$\scriptstyle (\bullet)$}
  };
\end{tikzpicture}\ \right]\]

\subsubsection{Labelling red and white trees}

We shall now replace the dots by numbers and label our trees. The
first labelling is easy: if a tree belongs to $\BWU(n)$, replace each
dot by a different integer from $[n]=\{1,\dots,n\}$. We shall denote
this set of labellings by $\BWTSL(n)$. The number of such labellings
is Sequence~A005172 of~\cite{Sloane} by definition of this sequence,
and their first values are
\begin{equation}
  1, 4, 32, 416, 7552, 176128, 5018624,  168968192, 6563282944, 288909131776, \dots 
\end{equation}
One can give an equivalent definition using the language of
species~\cite{Specie} or labeled combinatorial classes~\cite{AnComb}. Let
$\SetSp$ be the species of sets (such that $\SetSp[U] := \{U\}$) and
$\SetSp_{\geq k}$ the species of sets of cardinality at least $k$. Let $\ZSp$
denote the singleton species (one object in size $1$). We denote by $+$ the
sum of species (disjoint union of labelled classes) and by $\cdot$ the product
of species (Cartesian product of labelled classes). Finally the substitution
is denoted functionally (as in $\mathcal{A}(\mathcal{B})$).

Then $\FSp := \BWTSL(n)$ is the solution of the equation
\begin{equation}
  \label{eq_FSp}
  \FSp = \SetSp_{\geq2}(\FSp) + \SetSp_{\geq1}(\ZSp)\cdot\SetSp(\FSp).
\end{equation}
As a consequence, their exponential generating function $F=F_{\BWTSL}$
satisfies
\begin{equation}
  \label{eq_sg_TSL}
  F = \exp(F) - 1 - F + (\exp(x)-1) \exp(F).
\end{equation}

\subsection{Operad structure on labelled red and white
  trees}
\label{sub-op-rw}

Let us consider two trees $T_1$ and $T_2$ and a label $x$ inside $T_1$
belonging to node $z$. To ensure that the composition of two trees is
a tree labelled by distinct integers, we shall first renumber $T_1$
and $T_2$ as follows: shift all labels greater than $x$ in $T_1$ by
the size of $T_2$, and shift all labels of $T_2$ by $x-1$.

The composition $T_1 \circ_x T_2$ is then defined as
\begin{itemize}
\item[(W)] If the root of $T_2$ is not red,
  erase $x$ in $z$, add the labels (if any) of the root of $T_2$ to $z$ and
  put the children of the root of $T_2$ as new children of $z$.
\item[(R)] If the root of $T_2$ is red, consider three cases:
  \begin{enumerate}
  \item[(R1)] If $T_1$ is the tree with one node labelled $x$, then the
    result is $T_2$.
  \item[(R2)] If $z$ is not a leaf or if $z$ is a leaf with multiple labels,
    start with $T_1$, remove $x$ from the labels of $z$ and glue the root of
    $T_2$ as a child of $z$.
  \item[(R3)] Otherwise, $z$ is a leaf and not the root, and $x$ is its only
              label.
    Consider $z'$, the parent of $z$. Then remove the leaf $z$ and
    put all children of the root of $T_2$ as new children of $z'$.
  \end{enumerate}
\end{itemize}
In all cases, if $z$ has no remaining labels, it is colored as white.

\def\elTree{\begin{tikzpicture}
\node [root]{$\scriptstyle \left\{1\right\}$}
  child {
    node [white] {$\scriptstyle \left\{2\right\}$}
  } 
  child {
    node [white] {$\scriptstyle \left\{3\right\}$}
  };
\end{tikzpicture}}
\def\elTreeQ{\begin{tikzpicture}
\node [root]{$\scriptstyle \left\{1\right\}$}
  child {
    node [white] {$\scriptstyle \left\{2\right\}$}
  }
  child {
    node [white] {$\scriptstyle \left\{3,4\right\}$}
  };
\end{tikzpicture}}
\def\whiteTree{\begin{tikzpicture}
\node [root]{$\scriptstyle \left\{\right\}$}
  child {
    node [white] {$\scriptstyle \left\{1\right\}$}
  }
  child {
    node  [red] {$\scriptstyle \left\{\right\}$}
      child {
        node [white] {$\scriptstyle \left\{2\right\}$}
      }
      child {
        node [white] {$\scriptstyle \left\{3\right\}$}
      }
  };
\end{tikzpicture}}
\def\blackTree{\begin{tikzpicture}
\node [root] [red] {$\scriptstyle \left\{\right\}$}
  child {
    node [white] {$\scriptstyle \left\{1\right\}$}
  }
  child {
    node [white] {$\scriptstyle \left\{3\right\}$}
      child {
        node [white] {$\scriptstyle \left\{2\right\}$}
      }
  };
\end{tikzpicture}}
\def\blackTreebis{\begin{tikzpicture}
\node [root] [red] {$\scriptstyle \left\{\right\}$}
  child {
    node [white] {$\scriptstyle \left\{1\right\}$}
      child {
        node [white] {$\scriptstyle \left\{2\right\}$}
      }
  }
  child {
    node [white] {$\scriptstyle \left\{3\right\}$}
  };
\end{tikzpicture}}
\begin{align*}
\text{(Rule W)}&&
\elTree\,\circ_1\,\elTreeQ
&\ =\ 
\begin{tikzpicture}
\node [root]{$\scriptstyle \left\{1\right\}$}
  child {
    node [white] {$\scriptstyle \left\{2\right\}$}
  }
  child {
    node [white] {$\scriptstyle \left\{3,4\right\}$}
  }
  child {
    node [white] {$\scriptstyle \left\{5\right\}$}
  }
  child {
    node [white] {$\scriptstyle \left\{6\right\}$}
  };
\end{tikzpicture}
\\
\text{(Rule W)}&&
\elTreeQ\,\circ_3\,\elTree
&\ =\ 
\begin{tikzpicture}
\node [root]{$\scriptstyle \left\{1\right\}$}
  child {
    node [white] {$\scriptstyle \left\{2\right\}$}
  }
  child {
    node [white] {$\scriptstyle \left\{3,6\right\}$}
      child {
        node [white] {$\scriptstyle \left\{4\right\}$}
      }
      child {
        node [white] {$\scriptstyle \left\{5\right\}$}
      }
  };
\end{tikzpicture}
\\
\text{(Rule W)}&&
\elTreeQ\,\circ_1\,\whiteTree
&\ =\ 
\begin{tikzpicture}
\node [root]{$\scriptstyle \left\{\right\}$}
  child {
    node [white] {$\scriptstyle \left\{1\right\}$}
  }
  child {
    node  [red] {$\scriptstyle \left\{\right\}$}
      child {
        node [white] {$\scriptstyle \left\{2\right\}$}
      }
      child {
        node [white] {$\scriptstyle \left\{3\right\}$}
      }
  }
  child {
    node [white] {$\scriptstyle \left\{4\right\}$}
  }
  child {
    node [white] {$\scriptstyle \left\{5,6\right\}$}
  };
\end{tikzpicture}
\\
\text{(Rule W)}&&
\elTreeQ\,\circ_2\,\whiteTree
&\ =\ 
\begin{tikzpicture}
\node [root]{$\scriptstyle \left\{1\right\}$}
  child {
    node [white] {$\scriptstyle \left\{\right\}$}
      child {
        node [white] {$\scriptstyle \left\{2\right\}$}
      }
      child {
        node  [red] {$\scriptstyle \left\{\right\}$}
          child {
            node [white] {$\scriptstyle \left\{3\right\}$}
          }
          child {
            node [white] {$\scriptstyle \left\{4\right\}$}
          }
      }
  }
  child {
    node [white] {$\scriptstyle \left\{5,6\right\}$}
  }
  ;
\end{tikzpicture}
\\
%
\text{(Rule R1)}&&
\begin{tikzpicture}
\node [root]{$\scriptstyle \left\{1\right\}$};
\end{tikzpicture}
\,\circ_1\,\blackTreebis
&\ =\ 
\blackTreebis
\\
\text{(Rule R2)}&&
\elTreeQ\,\circ_1\,\blackTreebis
&\ =\ 
\begin{tikzpicture}
\node [root]{$\scriptstyle \left\{\right\}$}
  child {
    node  [red] {$\scriptstyle \left\{\right\}$}
      child {
        node [white] {$\scriptstyle \left\{1\right\}$}
          child {
            node [white] {$\scriptstyle \left\{2\right\}$}
          }
      }
      child {
        node [white] {$\scriptstyle \left\{3\right\}$}
      }
  }
  child {
    node [white] {$\scriptstyle \left\{4\right\}$}
  }
  child {
    node [white] {$\scriptstyle \left\{5,6\right\}$}
  };
\end{tikzpicture}
\\
\text{(Rule R3)}&&
\elTreeQ\,\circ_2\,\blackTreebis
&\ =\ 
\begin{tikzpicture}
\node [root]{$\scriptstyle \left\{1\right\}$}
  child {
    node [white] {$\scriptstyle \left\{2\right\}$}
      child {
        node [white] {$\scriptstyle \left\{3\right\}$}
      }
  }
  child {
    node [white] {$\scriptstyle \left\{4\right\}$}
  }
  child {
    node [white] {$\scriptstyle \left\{5,6\right\}$}
  };
\end{tikzpicture}
\end{align*}

\begin{theorem}
  The set $\BWTSL$ endowed with operations $\circ_x$ is a symmetric
  set-operad.
\end{theorem}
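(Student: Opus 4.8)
The plan is to verify directly the defining axioms of a symmetric set-operad for $\BWTSL$, viewed as a species in which $\BWTSL(I)$ is the set of red and white trees whose dots are labelled bijectively by a finite set $I$ (the renumbering convention in the text being only one concrete model of the disjoint union $(I\setminus\{x\})\sqcup J$). The composition is the map $\circ_x$ given by rules (W), (R1), (R2), (R3), and there are four things to establish: that each composite is a genuine red and white tree, equivariance of $\circ_x$ under relabelling, the unit axioms, and associativity.

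\textbf{Well-definedness.} First I would check that each rule outputs a valid labelled red and white tree. Distinctness of the labels and the fact that the label set is exactly $(I\setminus\{x\})\sqcup J$ are built into the renumbering. The real content is that the constraint ``an empty node has at least two children'' is preserved, and that the prescribed colouring agrees with the canonical rule ``an empty node is red iff all its children are white''. The only nodes whose emptiness or number of children changes are $z$ (the node carrying $x$) and, in rule (R3), its parent $z'$, so a short check on these two nodes suffices. The one subtle point is the sentence ``if $z$ has no remaining labels it is coloured white'': this must be consistent with the canonical rule. It is, because whenever $z$ becomes empty it simultaneously acquires a red child — in rule (W) the absorbed white-empty root of $T_2$ carries a red child, and in rule (R2) the glued root of $T_2$ is itself red — so by the canonical rule $z$ is indeed white. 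I would also note that, since $z$ starts out nonempty (it contains $x$) and stays white, the recolouring never propagates to the parent of $z$; and in (R3) the parent $z'$ loses a white child and gains only white children, so its colour is unchanged either. I would isolate this consistency as a small lemma.

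\textbf{Equivariance and unit.} Equivariance is immediate: the rules refer only to the shape of the trees and to the node containing the distinguished label $x$, never to the numerical values of the labels, so each $\circ_x$ is a natural transformation of species and commutes with the $\mathfrak{S}$-actions. For the unit I take the one-dot tree $u\in\BWTSL(1)$. The right unit law $T\circ_x u = T$ follows from rule (W), since $u$ has a white root bearing a single label and no children, so erasing $x$ and re-inserting the sole label of $u$ (which becomes $x$ after renumbering) with no new children leaves $T$ unchanged. The left unit law $u\circ_1 T = T$ is rule (R1) when the root of $T$ is red, and rule (W) (absorbing the root of $T$ into the single node of $u$) otherwise.

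\textbf{Associativity (the crux).} I would split this into the two standard axioms. For the parallel axiom $(A\circ_x B)\circ_{x'} C=(A\circ_{x'} C)\circ_x B$, with $x\neq x'$ both labels of $A$, the two insertion sites lie at nodes $z_x,z_{x'}$ of $A$; when these nodes are distinct and no (R3) moves children from one site into the other, the two operations act on disjoint parts of the tree and visibly commute, so only the cases $z_x=z_{x'}$ and the case where an (R3) deletes a leaf whose parent hosts the other label need examination. For the sequential axiom $(A\circ_x B)\circ_y C=A\circ_x(B\circ_y C)$, with $y$ a label of $B$, the insertion of $C$ occurs inside the copy of $B$; whenever the site of $y$ is an interior node of $B$ that is untouched by how the root of $B$ meets $A$, both sides reduce to the same double substitution and agree. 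The genuinely delicate cases are those where $y$ sits at the root of $B$: there the rule firing at the outer site (the $x$-site in $A$) depends on the colour of the root of the inserted tree, which is the root of $B$ on the left but the root of $B\circ_y C$ on the right, and composing $C$ into the root of $B$ can change that colour and its child-structure — rule (R2) attaches the red root of $C$ as a new child and thereby turns a red root of $B$ white, rule (W) merges $C$'s root into the root of $B$, and when $B$ is a single node rule (R1) replaces it outright by $C$. I would therefore handle the hard part as a finite case analysis indexed by the colour of the root of $B$, by which of (W), (R1), (R2) fires at $y$, and by the resulting colour of the root of $B\circ_y C$, checking in each case that the outer rule applied on the two sides yields the same tree. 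I expect this root-interaction analysis to be the main obstacle; everything else is bookkeeping. As a consistency check and an alternative viewpoint, once the correspondence between red and white trees and formal fractions is available — so that $\circ_x$ is carried to the composition of $\opFF$, which is already known to be a set-operad — associativity becomes automatic; but since that correspondence is established only afterwards, it is the direct verification above that I would carry out at this point.
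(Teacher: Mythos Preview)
Your approach is the same direct verification as the paper's, and your treatment of well-definedness, the unit, and equivariance is in fact more careful than the paper's (which barely mentions these). You also catch a case in the parallel axiom that the paper glosses over: the paper asserts that ``only the node labelled $x$ is changed'' in $A\circ_x B$, which is not literally true for rule (R3), where the parent $z'$ of the deleted leaf also changes; your observation that one must separately examine the situation where $x'$ lies at that parent is correct.

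For the sequential axiom, however, you are making the work harder than necessary and there is a small confusion in your analysis. You worry that composing $C$ into $B$ at $y$ may change the colour of the root of $B$, and in particular you write that ``rule (R2) \dots\ thereby turns a red root of $B$ white''. This cannot happen: a red root is \emph{empty}, so $y$ can never sit there. More generally, one checks at once that the colour of the root of $B$ is invariant under $B\circ_y C$ (whether $y$ lies at the root or not), since the node containing $y$ is always white before and after. This is the paper's key shortcut: because the root colour of $B$ does not change, the same rule among (W), (R1), (R2), (R3) fires at the outer $x$-site on both sides of $(A\circ_x B)\circ_y C = A\circ_x(B\circ_y C)$, and likewise the inner rule at $y$ is unaffected by whether the ambient tree is $B$ or $A\circ_x B$. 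With this observation, your anticipated ``main obstacle'' — the finite case analysis on root colours — essentially evaporates.
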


\Proof
Let us first check that that the $\circ_i$ are internal. The only problem that
could arise would be to have two red nodes as father and child, one being
created by emptying a node of the first tree. But any new empty node has at
least two children, so that this cannot happen.

Let us now check that the operations $\circ_x$ satisfy the axioms of an
operad, that is:
if $x$ and $y$ are two labels of a tree $A$,
and $t$ a label of a tree $B$,
\begin{equation}
(A\circ_x B)\circ_y C =
(A\circ_y C)\circ_x B,
\quad
(A\circ_x B)\circ_t C =
A\circ_x (B\circ_t C).
\end{equation}
Note that when applying the composition rule $A\circ_x B$, only the node
labelled $x$ in $A$ in changed. So the first equation is trivial except in the
case where $x$ and $y$ are in the same node $z$. In that case, we need to
consider all four cases for the roots of $B$ and $C$. If both are not red,
case $(W)$ applies twice and the result is obvious. If one is red, say the
root of $B$ and not the other, then case $(R2)$ applies twice when
substituting $B$ since $z$ cannot both be a leaf and have only one label.
Case $W$ applies twice when substituting $C$ so the result of both hand-sides
is the same.
Now, if both roots of $B$ and $C$ are red, for the same reason as before,
both substituting go by application of rule $(R2)$ again, so the same result
holds.

\medskip
Let us now check the second rule of operads.
If any tree is a single root with one label, the equality is clear.
Now, the root of $B$ can never become red after $B\circ_z C$.
So the substitution of $B$ into $A$ goes along the same case as the
substitution of $B\circ_z C$ into $A$ since the distinction between the $(R)$
cases only relies on properties of $A$. Now, the substitution of $C$ into $B$
also in obtained by applying the same case as its substitution into
$A\circ_x B$, so that the result on both sides is always the same.
\qed

\begin{note}
  In fact, this composition defines a symmetric operad, which will be
  considered in Section~\ref{symmop}.
\end{note}

\subsection{The operad $\BWTS$ and recursively labelled red-white trees}

We shall be interested now in the sub-operad $\BWTS$ of the operad
$\BWTSL$ on all labelled red and white trees, which is generated by
the four elements of size $2$ (and containing the element of size
$1$):

\begin{equation}
\left[ \geng12, \geno12, \gend12, \gens12
\right].
\end{equation}

\subsubsection{Recursively labelled red-white trees}

\begin{definition}
  We say that a labelling of a tree in $\BWU(n)$ is \emph{recursive}
  if, for any node, the set of labels of all its descendants
  (including its own labels) is an interval of $[n]$.
\end{definition}
We shall denote this set of labellings by $\BWTS(n)$.  The number of
such labeled trees is given by Sequence~A156017 of~\cite{Sloane} and
their first values are
\begin{equation}
  1, 4, 24, 176, 1440, 12608, 115584, 1095424, 10646016, 105522176, 1062623232, \dots 
\end{equation}
Here are the first examples: 
\[\left[\ \begin{tikzpicture}
    \node [root]{$\scriptstyle \left\{1\right\}$};
\end{tikzpicture}\ \right]\]

\[\left[\ \begin{tikzpicture}
\node [root]{$\scriptstyle \left\{1, 2\right\}$};
\end{tikzpicture}\, \begin{tikzpicture}
\node [root]{$\scriptstyle \left\{1\right\}$}
  child {
    node [white] {$\scriptstyle \left\{2\right\}$}
  };
\end{tikzpicture}\, \begin{tikzpicture}
\node [root]{$\scriptstyle \left\{2\right\}$}
  child {
    node [white] {$\scriptstyle \left\{1\right\}$}
  };
\end{tikzpicture}\, \begin{tikzpicture}
\node [fill=blue!30] [red] {$\scriptstyle \left\{\right\}$}
  child {
    node [white] {$\scriptstyle \left\{2\right\}$}
  }
  child {
    node [white] {$\scriptstyle \left\{1\right\}$}
  };
\end{tikzpicture}\ \right]\]

\[\left[\ \begin{tikzpicture}
\node [root]{$\scriptstyle \left\{1, 2, 3\right\}$};
\end{tikzpicture}\, \begin{tikzpicture}
\node [root]{$\scriptstyle \left\{1, 2\right\}$}
  child {
    node [white] {$\scriptstyle \left\{3\right\}$}
  };
\end{tikzpicture}\, \begin{tikzpicture}
\node [root]{$\scriptstyle \left\{1\right\}$}
  child {
    node [white] {$\scriptstyle \left\{3\right\}$}
  }
  child {
    node [white] {$\scriptstyle \left\{2\right\}$}
  };
\end{tikzpicture}\, \begin{tikzpicture}
\node [root]{$\scriptstyle \left\{1\right\}$}
  child {
    node [white] {$\scriptstyle \left\{2, 3\right\}$}
  };
\end{tikzpicture}\, \begin{tikzpicture}
\node [root]{$\scriptstyle \left\{1\right\}$}
  child {
    node [white] {$\scriptstyle \left\{2\right\}$}
      child {
        node [white] {$\scriptstyle \left\{3\right\}$}
      }
  };
\end{tikzpicture}\, \begin{tikzpicture}
\node [root]{$\scriptstyle \left\{1\right\}$}
  child {
    node [white] {$\scriptstyle \left\{3\right\}$}
      child {
        node [white] {$\scriptstyle \left\{2\right\}$}
      }
  };
\end{tikzpicture}\, \begin{tikzpicture}
\node [root]{$\scriptstyle \left\{1\right\}$}
  child {
    node  [red] {$\scriptstyle \left\{\right\}$}
      child {
        node [white] {$\scriptstyle \left\{3\right\}$}
      }
      child {
        node [white] {$\scriptstyle \left\{2\right\}$}
      }
  };
\end{tikzpicture}\, \begin{tikzpicture}
\node [root]{$\scriptstyle \left\{1, 3\right\}$}
  child {
    node [white] {$\scriptstyle \left\{2\right\}$}
  };
\end{tikzpicture}\, \begin{tikzpicture}
\node [root]{$\scriptstyle \left\{3\right\}$}
  child {
    node [white] {$\scriptstyle \left\{2\right\}$}
  }
  child {
    node [white] {$\scriptstyle \left\{1\right\}$}
  };
\end{tikzpicture}\, \begin{tikzpicture}
\node [root]{$\scriptstyle \left\{3\right\}$}
  child {
    node [white] {$\scriptstyle \left\{1, 2\right\}$}
  };
\end{tikzpicture}\, \begin{tikzpicture}
\node [root]{$\scriptstyle \left\{3\right\}$}
  child {
    node [white] {$\scriptstyle \left\{1\right\}$}
      child {
        node [white] {$\scriptstyle \left\{2\right\}$}
      }
  };
\end{tikzpicture}
\right.\]
\[\left.
\begin{tikzpicture}
\node [root]{$\scriptstyle \left\{3\right\}$}
  child {
    node [white] {$\scriptstyle \left\{2\right\}$}
      child {
        node [white] {$\scriptstyle \left\{1\right\}$}
      }
  };
\end{tikzpicture}\, \begin{tikzpicture}
\node [root]{$\scriptstyle \left\{3\right\}$}
  child {
    node  [red] {$\scriptstyle \left\{\right\}$}
      child {
        node [white] {$\scriptstyle \left\{2\right\}$}
      }
      child {
        node [white] {$\scriptstyle \left\{1\right\}$}
      }
  };
\end{tikzpicture}\, \begin{tikzpicture}
\node [root]{$\scriptstyle \left\{2, 3\right\}$}
  child {
    node [white] {$\scriptstyle \left\{1\right\}$}
  };
\end{tikzpicture}\, \begin{tikzpicture}
\node [root]{$\scriptstyle \left\{2\right\}$}
  child {
    node [white] {$\scriptstyle \left\{3\right\}$}
  }
  child {
    node [white] {$\scriptstyle \left\{1\right\}$}
  };
\end{tikzpicture}\, \begin{tikzpicture}
\node [fill=blue!30] [red] {$\scriptstyle \left\{\right\}$}
  child {
    node [white] {$\scriptstyle \left\{3\right\}$}
  }
  child {
    node [white] {$\scriptstyle \left\{2\right\}$}
  }
  child {
    node [white] {$\scriptstyle \left\{1\right\}$}
  };
\end{tikzpicture}\, \begin{tikzpicture}
\node [fill=blue!30] [red] {$\scriptstyle \left\{\right\}$}
  child {
    node [white] {$\scriptstyle \left\{1\right\}$}
  }
  child {
    node [white] {$\scriptstyle \left\{2, 3\right\}$}
  };
\end{tikzpicture}\, \begin{tikzpicture}
\node [fill=blue!30] [red] {$\scriptstyle \left\{\right\}$}
  child {
    node [white] {$\scriptstyle \left\{1\right\}$}
  }
  child {
    node [white] {$\scriptstyle \left\{2\right\}$}
      child {
        node [white] {$\scriptstyle \left\{3\right\}$}
      }
  };
\end{tikzpicture}
\right.\]
\[\left.
\begin{tikzpicture}
\node [fill=blue!30] [red] {$\scriptstyle \left\{\right\}$}
  child {
    node [white] {$\scriptstyle \left\{1\right\}$}
  }
  child {
    node [white] {$\scriptstyle \left\{3\right\}$}
      child {
        node [white] {$\scriptstyle \left\{2\right\}$}
      }
  };
\end{tikzpicture}\, \begin{tikzpicture}
\node [root]{$\scriptstyle \left\{\right\}$}
  child {
    node [white] {$\scriptstyle \left\{1\right\}$}
  }
  child {
    node  [red] {$\scriptstyle \left\{\right\}$}
      child {
        node [white] {$\scriptstyle \left\{3\right\}$}
      }
      child {
        node [white] {$\scriptstyle \left\{2\right\}$}
      }
  };
\end{tikzpicture}\, \begin{tikzpicture}
\node [fill=blue!30] [red] {$\scriptstyle \left\{\right\}$}
  child {
    node [white] {$\scriptstyle \left\{3\right\}$}
  }
  child {
    node [white] {$\scriptstyle \left\{1, 2\right\}$}
  };
\end{tikzpicture}\, \begin{tikzpicture}
\node [fill=blue!30] [red] {$\scriptstyle \left\{\right\}$}
  child {
    node [white] {$\scriptstyle \left\{3\right\}$}
  }
  child {
    node [white] {$\scriptstyle \left\{1\right\}$}
      child {
        node [white] {$\scriptstyle \left\{2\right\}$}
      }
  };
\end{tikzpicture}\, \begin{tikzpicture}
\node [fill=blue!30] [red] {$\scriptstyle \left\{\right\}$}
  child {
    node [white] {$\scriptstyle \left\{3\right\}$}
  }
  child {
    node [white] {$\scriptstyle \left\{2\right\}$}
      child {
        node [white] {$\scriptstyle \left\{1\right\}$}
      }
  };
\end{tikzpicture}\, \begin{tikzpicture}
\node [root]{$\scriptstyle \left\{\right\}$}
  child {
    node [white] {$\scriptstyle \left\{3\right\}$}
  }
  child {
    node  [red] {$\scriptstyle \left\{\right\}$}
      child {
        node [white] {$\scriptstyle \left\{2\right\}$}
      }
      child {
        node [white] {$\scriptstyle \left\{1\right\}$}
      }
  };
\end{tikzpicture}\ \right]\]
Let $F:=F_{\BWTS}$ be the ordinary generating series of such trees. Because of
the recursive labeling, trees having an empty root (and thus at least two
sub-trees) are in bijection with sequences of length at least 2 of trees as
follows: the sequence $(t_1, t_2, \dots, t_r)$ with $r\geq2$ of trees of sizes
$(s_1, \dots, s_r)$, corresponds to the following trees
\begin{equation}
  (t_1, t_2, \dots, t_r)
  \quad\longleftrightarrow\quad
  \begin{tikzpicture}[level distance = 1.2cm, sibling distance =1.5cm]
    \node  {
      \begin{tikzpicture}\node [red] {\{\}};\end{tikzpicture} 
      or
      \begin{tikzpicture}\node [white] {\{\}};\end{tikzpicture}}
    child { node {$t_1$} }
    child { node {$t_2[s_1]$} }
    child { node {\dots} }
    child { node {\hbox to 1cm{$t_r[s_1+\dots+s_{r-1}]\hss$}} };
  \end{tikzpicture}
\end{equation}
where $t[k]$ denote the tree obtained from $t$ by adding $k$ to all
integers in the labels. Therefore the generating series of those trees
is given by $1/(1-F)-1-F$. In a similar fashion, there is a bijection
between trees and non-empty sequences of either a dot (labelled $1$) or
a tree, by shifting the labels (including dots) as previously and
gathering dots into the roots. Here is an example:
\[
\left(\
\begin{tikzpicture}
\node [root]{$\scriptstyle \left\{1\right\}$}
  child {
    node [white] {$\scriptstyle \left\{2\right\}$}
  };
\end{tikzpicture}\,
\bullet
\bullet\
\begin{tikzpicture}
\node [fill=blue!30] [red] {$\scriptstyle \left\{\right\}$}
  child {
    node [white] {$\scriptstyle \left\{1, 2\right\}$}
  }
  child {
    node [white] {$\scriptstyle \left\{3\right\}$}
  };
\end{tikzpicture}\
\begin{tikzpicture}
\node [fill=blue!30] [white] {$\scriptstyle \left\{1\right\}$};
\end{tikzpicture}\
\bullet
\right)
\quad\longleftrightarrow\quad
\begin{tikzpicture}
  \node [root]{$\scriptstyle \left\{3,4,9\right\}$}
  child {
    node [white]{$\scriptstyle \left\{1\right\}$}
    child {
      node [white] {$\scriptstyle \left\{2\right\}$}
    }
  }
  child [missing]
  child {
    node [red] {$\scriptstyle \left\{\right\}$}
    child {
      node [white] {$\scriptstyle \left\{5, 6\right\}$}
    }
    child {
      node [white] {$\scriptstyle \left\{7\right\}$}
    }
  }
  child [missing]
  child {
    node [white] {$\scriptstyle \left\{8\right\}$}
  };
\end{tikzpicture}
\]
Since we are here only considering trees without empty root, their
generating series is therefore $(x+F)/(1-(x+F))-F/(1-F)$. Summarizing
the previous reasoning we get that, the ordinary generating series
$F:=F_{\BWTS}$ satisfies
\begin{equation}
  \label{eq_sg_TS_first}
  F= 1/(1-F) - 1 - F + (x+F)/(1-(x+F)) - F/(1-F),
\end{equation}
that is, after simplifications and clearing the denominator
\begin{equation}
  \label{eq_sg_TS}
  F = x + 2x F + 2F^2.
\end{equation}

One then easily checks that this equation is the same as
Equation~\eqref{eq_sg_CT} (up to multiplication by $x$) obtained for
the number of canonical $\opGR$ trees, hence suggesting that a natural
operad isomorphism exists between the two structures.
We shall first check that the recursively labelled trees are indeed
elements of an operad.

\subsubsection{The sub-operad $\BWTS$}

\begin{proposition}
  \label{prop-op4-bw}
  The elements of $\BWTS$ are the recursively labelled red and white trees.
\end{proposition}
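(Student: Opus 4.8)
The plan is to prove the two inclusions separately, writing $\RSp(n)\subseteq\BWTSL(n)$ for the set of recursively labelled trees of weight $n$.

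Since $\BWTS$ is by definition the sub-operad of $\BWTSL$ generated by the four weight-$2$ elements (together with the weight-$1$ unit), the inclusion $\BWTS(n)\subseteq\RSp(n)$ will follow as soon as I check that (i) the four generators are recursively labelled, which is immediate from the weight-$2$ trees listed above, and (ii) the family $\RSp=(\RSp(n))_n$ is stable under the composition maps $\circ_x$, hence is a sub-operad of $\BWTSL$. For (ii) the decisive observation is that the renumbering performed before a composition $T_1\circ_x T_2$ places the labels of $T_2$ on the interval $\{x,\dots,x+s-1\}$, where $s$ is the weight of $T_2$, and translates every label of $T_1$ that is $\ge x+1$ upward by $s-1$. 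I would isolate the elementary lemma that, for an interval $J\subseteq[m]$, its image under the substitution ``fix the labels $<x$, replace $x$ by the whole block $\{x,\dots,x+s-1\}$, and shift the labels $>x$ by $s-1$'' is again an interval: if $x\notin J$ the interval is fixed or translated as a single block, and if $J=\{a,\dots,b\}$ contains $x$ its image is $\{a,\dots,b+s-1\}$. Granting this, a short case check on the rules $(W)$, $(R1)$, $(R2)$, $(R3)$ shows that the set of labels lying below any node of $T_1\circ_x T_2$ is either the shift by $x-1$ of such a set of $T_2$ (for nodes internal to the grafted copy) or the substitution image of such a set of $T_1$ (for the node $z$, for its ancestors, and for the untouched nodes). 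The merging of root labels in $(W)$, the flattening in $(R3)$ and the recolouring of emptied nodes do not change these sets, so every node of the composite still spans an interval.

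For the reverse inclusion $\RSp(n)\subseteq\BWTS(n)$ I would argue by strong induction on the weight $n$, using that a sub-operad is closed under composition. For $n=1$ the single node is the unit, and for $n=2$ the four recursively labelled trees are exactly the four generators. For $n\ge3$ the goal is to exhibit a factorisation $T=X\circ_x Y$ in which both $X$ and $Y$ are recursively labelled of weight strictly between $1$ and $n$ (hence both of weight $\ge2$ and $<n$); the inductive hypothesis then places $X$ and $Y$ in $\BWTS$, and closure under $\circ_x$ gives $T\in\BWTS$. The factorisations are read off from the two bijective decompositions already used for the generating series. When the root carries at least two labels one contracts a suitable pair of them, splitting one node label into two consecutive ones via rule $(W)$ (the single-node generator). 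When the root is empty with $r\ge2$ children one peels the last one or two children using the red-root generator and rule $(R3)$, which realises the ``sequence of child subtrees'' decomposition. When the root carries a single label above one or more subtrees one grafts or flattens a subtree by means of the two stacking generators (rules $(W)$ and $(R3)$); for instance the weight-$3$ tree with root $\{1,3\}$ over a leaf $\{2\}$ factors as the single-node generator composed, at its first label, with a stacking generator. In each case the recursive labelling of $T$ forces the two factors to be recursively labelled and strictly smaller.

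I expect the main obstacle to lie entirely in the reverse inclusion: one must organise the search for the factorisation $T=X\circ_x Y$ into an exhaustive case analysis guaranteeing that, for every recursively labelled tree of weight $\ge3$, at least one admissible split exists, that both factors are genuinely recursive and strictly smaller, and that recomposing returns $T$ on the nose. The delicate points are the red cases $(R2)$ and $(R3)$ and the forced recolouring of emptied nodes, where it is the shape of the tree, and not merely its set of labels, that is altered, together with the fact that a node's labels need not themselves form an interval (as in $\{1,3\}$), so that the ``obvious'' peel is not always available. By contrast the stability argument $(i)$--$(ii)$ for the forward inclusion is routine once the interval lemma is isolated.
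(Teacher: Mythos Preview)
Your forward inclusion is correct and essentially identical to the paper's: both verify that the set of labels below any node of $T_1\circ_x T_2$ is either a shift of such a set in $T_2$ or the ``interval substitution'' image of such a set in $T_1$, and your interval lemma makes this step cleaner than the paper's one-sentence version.

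For the reverse inclusion the paper takes a different route. Rather than a root-based trichotomy, it first disposes of any tree having a node with at least two labels (factoring through the single-node generator there), and otherwise chooses a \emph{leaf} $l$ all of whose siblings are leaves (a leaf of maximal depth works). The case split is then on the parent $p$ of $l$: if $p$ carries a label one peels off $l$ using a stacking generator; if $p$ is empty it is automatically red (its children are leaves, hence white), and one either strips a sibling of $l$ or, when $l$ has a unique sibling, cuts off the whole $p$-subtree. Working at a leaf rather than at the root sidesteps several of the colour-change complications you anticipate.

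Your root-based organisation can be made to work, but your case~2 as written has a real gap. When the root is empty and \emph{white} (hence has at least one red child), peeling via rule~$(R3)$ is impossible: the smallest example is the weight-$3$ tree with white empty root, one leaf $\{1\}$, and one red child over leaves $\{2\},\{3\}$. Its unique factorisation into weight-$2$ pieces is $\gend12\circ_2\gens12$, which goes through rule~$(R2)$ (the label~$2$ sits in a non-leaf node of the left factor, and the red generator is glued on as a child, not flattened). The same phenomenon hits your case~3 when the root carries a single label over a unique red child: e.g.\ root $\{1\}$ over a red node with leaves $\{2\},\{3\}$ factors only as $\geno12\circ_2\gens12$ via $(R2)$. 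So your exhaustive case analysis must allow $(R2)$-type factorisations (absorb a red subtree into a node that retains another label or another child), not only $(R3)$-type flattenings; once this is added the induction closes.
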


\Proof
First, let us note that the composition $\circ_x$ of two recursively labelled
red and white trees $T_1$ and $T_2$ gives rise to a recursively labelled red
and white tree $T$.
Indeed, $T$ is a labelled tree. Now, if the node that contains $x$ on $T_1$
has as set of labels of descendants $[y,z]$, the corresponding node in $T$ has
as set of labels of descendants $[y,z+|T_2|]$. Now, since all labels greater
than $x$ in $T_1$ have been shifted by the size $|T_2|$ of $T_2$, all nodes of
$T$ also have an interval as set of labels of their descendants.

Now, let us prove the converse, that is, that all recursively labelled red and
white trees can be obtained as substitutions of smaller trees.
The property is true for trees of size at most $2$.
Now, if a node $z$ of a tree $T$ of size $k$ has more than two labels, $T$ is
easily written as a substitution of smaller trees on this node: indeed, $T$
is obtained by substituting the tree with a root labelled $\{1,2\}$ in the
tree $T'$ obtained from $T$ by removing any of the consecutive integers
labelling $z$ and then renumbering all labels in order to obtain all numbers
from $1$ to $k-1$.

Otherwise, consider a leaf $l$ that is not an uncle/aunt.
Let us consider cases depending on the parent $p$ of $l$:
\begin{itemize}
\item If $p$ has $x$ as a label, then $T=T'\circ_y T''$ where $T'$ is obtained
  from $T$ by changing $x$ into $y$ and removing $l$, and $T''$ is the tree
  having as root $x$ with one child $l$.
\end{itemize}
Otherwise, $p$ is empty, and it is necessarily red because if $l$ has some
siblings then they are leaves and thus white.
\begin{itemize}
\item Now, if $l$ has two siblings or more, then $T=T'\circ_y
  T''$ where $T'$ is obtained from $T$ by labelling $l$ by $y$ and removing
  one sibling of $l$, and $T''$ is a tree with a red root and two children:
  $l$ and the missing sibling of $l$.
\item Otherwise, $l$ has only one sibling, $p$ has itself a parent $p'$, which
  is white, either labelled or not. In both cases, $T$ is obtained as
  $T=T'\circ_y T''$ where $T'$ is obtained from $T$ by changing $p$ and its
  sub-tree by a new leaf labelled $y$ and where $T''$ is the sub-tree of $T$ of
  root $p$.
\end{itemize}
We conclude by induction on the size of the trees.
\qed

\begin{theorem}
The operads $\BWTS$ and $\opGR$ are isomorphic as operads.
\end{theorem}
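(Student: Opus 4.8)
The plan is to produce an explicit morphism of operads $\Phi \colon \opGR \to \BWTS$ defined on the four generators, and then promote it to an isomorphism by a dimension count. Since $\opGR$ is by construction the free operad $\opFree$ on the binary symbols $\{\gautrid,\ \droittrid,\ \miltrid,\ \odot\}$ modulo Relations~\eqref{eq.def.GR}, a morphism out of $\opGR$ is determined freely by the images of these four symbols, provided those images satisfy the eight relations. I would send the associative symbol $\miltrid$ to the single white node $\geno12$, the symbol $\odot$ to the red tree $\gens12$, and $\droittrid,\gautrid$ to the two arity-two trees $\geng12$ and $\gend12$, the orientation between the latter two pairs being fixed as the unique one for which the relations hold (this is pinned down during the verification below). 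Because $\BWTS$ is, by definition, the sub-operad of labelled red and white trees generated by precisely these four arity-two elements, the image of $\Phi$ is a sub-operad containing all four generators, hence equals $\BWTS$; thus $\Phi$ is automatically surjective once it is shown to be well defined.

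Well-definedness is the heart of the argument and the step I expect to be the main obstacle. For each of the eight Relations~\eqref{eq.def.GR} I would compute both sides as compositions of arity-two red and white trees using the rules (W) and (R1)--(R3) of Section~\ref{sub-op-rw}, obtaining in each case an arity-three tree, and then check that the two sides coincide. The associativity relations for $\miltrid$ and for $\odot$, together with the mixed relations involving $\miltrid$, reduce to the white rule (W) and are routine; the delicate cases are those that create a red node, where one must track which of (R1), (R2), (R3) applies and verify that the branching of the red rules still yields equal trees on both sides. This is a finite verification, but it is the place where the combinatorics of the composition genuinely has to be used rather than transported formally. In particular, one cannot simply invoke that the relations already hold in $\opFF_4$, because the isomorphism $\opFF_4 \cong \BWTS$ is only assembled later, in Section~\ref{all_three_are_isomorphic}; the check must therefore be carried out intrinsically inside $\BWTS$.

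Finally I would upgrade the surjection to an isomorphism by comparing cardinalities arity by arity. In each arity $n$ the set $\opGR(n)$ is finite, being a quotient of the finite piece $\opFree(n)$; by the canonical-form analysis of Section~\ref{subsection:canonical} its cardinality equals the number of canonical trees, whose generating series satisfies Equation~\eqref{eq_sg_CT}. On the other side, Proposition~\ref{prop-op4-bw} identifies $\BWTS(n)$ with the recursively labelled red and white trees, whose generating series satisfies Equation~\eqref{eq_sg_TS}; these two series agree after the substitution $F \mapsto F/x$ that converts the count by internal nodes into the count by leaves, so that $|\opGR(n)| = |\BWTS(n)|$ for every $n$. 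A surjection between finite sets of equal cardinality is a bijection, so each $\Phi_n$ is bijective; being an arity-preserving morphism of operads that is bijective in every arity, $\Phi$ admits an inverse that is again a morphism, and hence is an isomorphism of operads, as claimed.
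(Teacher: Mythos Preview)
Your proposal is correct and follows essentially the same route as the paper: define the map on the four generators, verify the eight relations of~\eqref{eq.def.GR} hold in $\BWTS$ by direct computation with the rules (W), (R1)--(R3), and then conclude by the equinumerosity established via Equations~\eqref{eq_sg_CT} and~\eqref{eq_sg_TS}. The paper's own proof is slightly terser (it simply states the generator correspondence and checks one relation in detail), but the logical skeleton is identical.
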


\Proof
We already know that as sets, $\BWTS$ and $\opGR$ are equinumerous.
Since $\opGR$ is the quotient of the free operad on four generators
with eight relations, we only need to prove that the generators of $\opGR$
satisfy the same relations.
The decoding is the following:
\begin{equation}
\left[\geno12,\geng12,\gend12,\gens12
\right]
\Longleftrightarrow
[\treeone{\circ},\treeone{\prec},\treeone{\succ},\treeone{\odot}].
\end{equation}
The fact that all relations hold is immediate given the definition of the
operad $\BWTS$.
For example, the first relation of Equation \eqref{eq.def.GR}
\begin{equation}
(x\droittrid y)\gautrid z = x\droittrid (y\gautrid z)\,,\\
\end{equation}
rewrites as
\begin{equation}
(a\gautrid z)\circ_a (x\droittrid y)
 = (x\droittrid a) \circ_a (y\gautrid z)\,,
\end{equation}
which rewrites in the red and white trees as
\begin{equation}
\geng az\circ_a \gend xy = \gend xa \circ_a \geng yz\,,
\end{equation}
which is true since both expressions are equal to
\begin{equation}
\begin{tikzpicture}
\node [root] {$\scriptstyle \left\{y\right\}$}
  child {
    node [white] {$\scriptstyle \left\{x\right\}$}
  }
  child {
    node [white] {$\scriptstyle \left\{z\right\}$}
  };
\end{tikzpicture}
\end{equation}
The trees corresponding to the eight relations are

\begin{equation}
\begin{tikzpicture}
\node [root] {$\scriptstyle \left\{y\right\}$}
  child {
    node [white] {$\scriptstyle \left\{x\right\}$}
  }
  child {
    node [white] {$\scriptstyle \left\{z\right\}$}
  };
\end{tikzpicture},
\
\begin{tikzpicture}
    \node [root]{$\scriptstyle \left\{xyz\right\}$};
\end{tikzpicture},
\
\begin{tikzpicture}
\node [root] {$\scriptstyle \left\{yz\right\}$}
  child {
    node [white] {$\scriptstyle \left\{x\right\}$}
  };
\end{tikzpicture},
\
\begin{tikzpicture}
\node [root] {$\scriptstyle \left\{xz\right\}$}
  child {
    node [white] {$\scriptstyle \left\{y\right\}$}
  };
\end{tikzpicture},
\
\begin{tikzpicture}
\node [root] {$\scriptstyle \left\{xy\right\}$}
  child {
    node [white] {$\scriptstyle \left\{z\right\}$}
  };
\end{tikzpicture},
\
\begin{tikzpicture}
\node [red] {$\scriptstyle \left\{\right\}$}
  child { node [white] {$\scriptstyle \left\{x\right\}$} } 
  child { node [white] {$\scriptstyle \left\{y\right\}$} } 
  child { node [white] {$\scriptstyle \left\{z\right\}$} };
\end{tikzpicture},
\
\begin{tikzpicture}
\node [root] {$\scriptstyle \left\{x\right\}$}
  child { node [white] {$\scriptstyle \left\{y\right\}$} } 
  child { node [white] {$\scriptstyle \left\{z\right\}$} };
\end{tikzpicture},
\
\begin{tikzpicture}
\node [root] {$\scriptstyle \left\{z\right\}$}
  child { node [white] {$\scriptstyle \left\{x\right\}$} } 
  child { node [white] {$\scriptstyle \left\{y\right\}$} };
\end{tikzpicture}.
\end{equation}
\qed

\subsection{From red and white trees to $\opFF_4$}
\label{sec_bijections}

Let us summarize what we have in terms of operads.
We first have the operad $\opGR$ that is sent surjectively to the operad
$\opFF_4$ since $\opGR$ is the quotient of the free operad on four generators
with eight relations and that $\opFF_4$ has four generators and satisfies the
eight relations. We also have that $\opGR$ and $\BWTS$ are isomorphic
operads on equinumerous sets.
To conclude that $\opFF_4$ is isomorphic to $\opGR$, it only remains
to prove that there exists an injective morphism of operads $\Phi$
from $\BWTS$ to $\opFF_4$ that makes the diagram below commutative.
\begin{center}
\begin{tikzpicture}
\label{diagramme_commut}
  \matrix (m) [matrix of math nodes,row sep=3em,column sep=4em,minimum width=2em]
  {
     \opGR & \opFF_4 \\
     \BWTS & \opFF \\};
  \path[-stealth]
    (m-1-1) edge node [left] {$\simeq$} (m-2-1)
            edge [->>] (m-1-2)
    (m-2-1) edge [>->] node [above] {$\Phi$} (m-1-2)
    (m-1-2) edge [>->] (m-2-2)
    (m-2-1) edge [>->] node [above] {$\Phi$} (m-2-2);
\end{tikzpicture}
\end{center}

Our injective morphism is a set morphism, that is, a bijection.

\subsubsection{An injection from $\BWTSL$ to the fractions in $\opFF$}
\label{sec_bij_RW_Frac}

We define here a map from labelled red and white trees to formal
fractions, that will be a bijection with its image.

The bijection is as follows. Consider a red and white tree $T$.
For each node $z$, compute the set $S(z)$ of all values in the sub-tree of root
$z$ and then define $E(z)$ as either
\begin{equation}
\left\{
\begin{array}{ll}
\frac{1}{[S(z)]} & \text{if $z$ is white}, \\[5pt]
[S(z)]  & \text{if $z$ is red and not the root},\\
1 & \text{if $z$ is red and the root}.
\end{array}
\right.
\end{equation}

The fraction associated with $T$ is then
\begin{equation}
\label{eq_bij_RW_Frac}
\Phi(T) := \prod_{z\in nodes(T)} E(z).
\end{equation}

For example, we have
\[\begin{tikzpicture}
\node [root] {$\scriptstyle \left\{5\right\}$}
  child {
    node [white] {$\scriptstyle \left\{1, 2\right\}$}
      child {
        node [red] {$\scriptstyle \left\{\right\}$}
          child {
            node [white] {$\scriptstyle \left\{3\right\}$}
          }
          child {
            node [white] {$\scriptstyle \left\{4\right\}$}
          }
      }
  }
  child [missing]
  child {
    node [white] {$\scriptstyle \left\{\right\}$}
      child {
        node [white] {$\scriptstyle \left\{8\right\}$}
      }
      child {
        node [red] {$\scriptstyle \left\{\right\}$}
          child {
            node [white] {$\scriptstyle \left\{6\right\}$}
          }
          child {
            node [white] {$\scriptstyle \left\{7\right\}$}
          }
      }
  };
\end{tikzpicture}\qquad
\frac{\mbox{[34]} \mbox{[67]}}{\mbox{[3]} \mbox{[4]} \mbox{[1234]}
\mbox{[6]} \mbox{[7]} \mbox{[8]} \mbox{[678]} \mbox{[12345678]}}\]
\medskip

\[\begin{tikzpicture}
\node [red] {$\scriptstyle \left\{\right\}$}
  child {
    node [white] {$\scriptstyle \left\{1\right\}$}
  }
  child [missing]
  child {
    node [white] {$\scriptstyle \left\{\right\}$}
      child {
        node [white] {$\scriptstyle \left\{5\right\}$}
          child {
            node [white] {$\scriptstyle \left\{4\right\}$}
              child {
                node [white] {$\scriptstyle \left\{2\right\}$}
              }
              child {
                node [white] {$\scriptstyle \left\{3\right\}$}
              }
          }
      }
      child [missing]
      child {
        node [red] {$\scriptstyle \left\{\right\}$}
          child {
            node [white] {$\scriptstyle \left\{6\right\}$}
          }
          child {
            node [white] {$\scriptstyle \left\{7, 8\right\}$}
          }
      }
  };
\end{tikzpicture}\qquad
\frac{\mbox{[678]}}{\mbox{[1]}  \mbox{[2]} \mbox{[3]}
 \mbox{[2345]} \mbox{[234]} \mbox{[6]} \mbox{[78]}  \mbox{[2345678]}}\]

\begin{lemma}
  \label{lemma_Phi_injective}
  $\Phi$ is an injective map from $\BWTSL$ to $\opFF$.
\end{lemma}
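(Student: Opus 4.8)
The plan is to prove injectivity by showing that the whole tree $T$ can be read back from the reduced formal fraction $\Phi(T)$. Everything hinges on the map $z\mapsto S(z)$ assigning to each node the set of labels occurring in its subtree, so I would first analyse this map carefully, then observe that it forces $\Phi(T)$ to record the nodes and their colours without any simplification, and finally invert the construction.

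The first key step is to show that $z\mapsto S(z)$ is injective on the nodes of a fixed tree $T\in\BWTSL$. The crucial inequality is $S(z)\supsetneq S(c)$ for every child $c$ of $z$: if $z$ carries at least one label this is immediate, and if $z$ is empty then, by the defining rules of red and white trees, it has at least two children, whose label sets are nonempty and pairwise disjoint, so again $S(z)$ strictly contains each $S(c)$. Consequently the sets $S(z)$ strictly decrease along any root-to-leaf path, while the sets attached to nodes in disjoint subtrees are disjoint; hence the family $\{S(z)\}_z$ is laminar and $z\mapsto S(z)$ is injective.

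The second step exploits this injectivity to read the node data directly off $\Phi(T)$. Since distinct nodes have distinct sets $S(z)$, and since each node is either red or white (never both), the symbols $[S(z)]$ contributed to the numerator by the red non-root nodes are pairwise distinct from those contributed to the denominator by the white nodes. Therefore no cancellation occurs in $\Phi(T)$, and its reduced form records exactly the set $N$ of subtree-sets of red non-root nodes (the numerator symbols) and the set $D$ of subtree-sets of white nodes (the denominator symbols), each with multiplicity one.

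The final step reconstructs $T$. Put $[n]=\bigcup_{S\in N\cup D}S$, which equals $S$ of the root and lies in $D$ precisely when the root is white; the laminar family $\mathcal{L}=N\cup D\cup\{[n]\}$ is then exactly $\{S(z):z\text{ a node}\}$. Its Hasse diagram under inclusion, rooted at $[n]$, recovers the underlying tree, the children of a node $S$ being the maximal elements of $\mathcal{L}$ strictly contained in $S$; the colour of each node is read from $\mathcal{L}$ (elements of $D$ are white, the others red, the root being red exactly when $[n]\notin D$), and the labels of a node $S$ are recovered as $S$ minus the union of the sets attached to its children. This determines $T$ uniquely, so $\Phi$ is injective. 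I expect the main obstacle to be the injectivity of $z\mapsto S(z)$ together with the resulting absence of simplification: this is exactly where the structural constraints on red and white trees (nonempty nodes are white, empty nodes have at least two children) enter, and it is what makes $\Phi(T)$ a faithful record of the full node-and-colour data; a minor extra point is the red root, which contributes only the trivial factor $1$ and must be restored by hand as the maximal set $[n]$.
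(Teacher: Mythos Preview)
Your proof is correct and follows essentially the same reconstruction strategy as the paper: both arguments recover the tree from the fraction by identifying the node sets $S(z)$ among the symbols and rebuilding the tree structure from their containment relations. The paper phrases this as a top-down recursion (decide the colour of the root from whether $[S]$ occurs, then take the coarsest partition to find the children, and iterate), whereas you first isolate the key fact that $z\mapsto S(z)$ is injective---hence $\Phi(T)$ is already reduced---and then rebuild the whole tree in one step as the Hasse diagram of the resulting laminar family; your version is a touch more explicit about why no cancellation occurs, which the paper leaves implicit.
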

\begin{proof}
  We prove that one easily rebuilds the tree from the fraction: let us show
  how to rebuild the root, the rest of the construction being done
  recursively.  Let $F$ be a fraction obtained by the previous process.  Let
  $S$ be the union of all values in the fraction.

  Either $[S]$ belongs to the fraction or not. If not, the root is
  red, separate $[S]$ in the greatest possible number of sets (into
  the coarsest partition) so that any element of $F$ is a subset of
  one of these sets. These are the children of the root. Iterate the
  process on each child separately.

  If $[S]$ belongs to $F$, then it is on the denominator of $F$.  Then the
  root is white and labelled by $[S]$ minus the union of all values of the
  fraction $F'=[S] F$. Note that the root might be empty.  Then split $F'$ as
  in the case of the red root and iterate.
\end{proof}
\subsubsection{The main theorem}
\label{all_three_are_isomorphic}

\begin{theorem}
\label{theorem_main}
The three set-operads $\BWTS$ on red and white trees,
$\opFF_4$ on formal fractions and $\opGR$ are isomorphic.
\end{theorem}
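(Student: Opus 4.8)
The plan is to reduce the entire statement to one fact: the injective map $\Phi$ of Lemma~\ref{lemma_Phi_injective} is a morphism of set-operads. Granting this, everything closes immediately. Evaluating the defining product~\eqref{eq_bij_RW_Frac} on the four trees of size~$2$ gives
\[
\gend12\ \mapsto\ \tfrac1{[12][1]}=F_{\droittrid},\quad
\geng12\ \mapsto\ \tfrac1{[12][2]}=F_{\gautrid},\quad
\geno12\ \mapsto\ \tfrac1{[12]}=F_{\miltrid},\quad
\gens12\ \mapsto\ \tfrac1{[1][2]}=F_{\odot},
\]
so $\Phi$ carries the four generators of $\BWTS$ exactly onto the four generators of $\opFF_4$. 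Since $\Phi$ is a morphism, $\Phi(\BWTS)$ is the sub-operad of $\opFF$ generated by these fractions, namely $\opFF_4$ itself, so $\Phi$ restricts to a surjection $\BWTS\twoheadrightarrow\opFF_4$; being injective by Lemma~\ref{lemma_Phi_injective}, it is a bijective morphism and hence an isomorphism $\BWTS\simeq\opFF_4$. Combined with the already proven isomorphism $\opGR\simeq\BWTS$, this yields the three-way isomorphism. Finally, the two operad morphisms $\opGR\to\opFF$ obtained by the two paths of the square in Section~\ref{sec_bijections} agree on the four generators of $\opGR$, hence everywhere, so the square commutes and the surjection $\opGR\twoheadrightarrow\opFF_4$ is forced to be an isomorphism.

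The real content is therefore the identity $\Phi(T_1\circ_x T_2)=\Phi(T_1)\circ_x\Phi(T_2)$. Put $n=|T_2|$ and let $z$ be the node of $T_1$ bearing the label $x$. By Equation~\eqref{def.composition.ff}, the right-hand side is assembled in three steps: in $\Phi(T_1)$ one substitutes the set $S_{x,n}=\{x,\dots,x+n-1\}$ for the label $x$ (shifting the larger labels), one multiplies by the extra factor $[S_{x,n}]$, and one multiplies by the relabelled fraction $\Phi(T_2)$. The substitution step merely rewrites the symbols $[S(z')]$ attached to $z$ and to all its ancestors $z'$, replacing each descendant set by the correct descendant set of the composite tree; this is precisely what rules (W) and (R) do to those nodes. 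Hence the whole weight of the identity lies in how the extra factor $[S_{x,n}]$ interacts with the contribution of the root of $T_2$.

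I would split along the colour of that root, which is exactly the dichotomy of the composition rules. If the root of $T_2$ is \emph{white} it contributes $\tfrac1{[S_{x,n}]}$ to $\Phi(T_2)$, cancelling the factor $[S_{x,n}]$; what survives are the children of the root, now hung below $z$, together with the merging of its labels into $z$ --- this is Rule~(W). If the root of $T_2$ is \emph{red} then, being a root, it contributes the trivial factor $1$, so $[S_{x,n}]$ remains in the numerator, i.e.\ as a red non-root node with descendant set $S_{x,n}$: the red root glued in as a new child, Rule~(R2). The two remaining red cases are degenerate: in Rule~(R1) the tree $T_1$ is the one-node tree and the identity holds because $\Phi$ sends it to the unit $\tfrac1{[1]}$ of $\opFF$; in Rule~(R3) the node $z$ is a single-label leaf, so it is $z$'s own factor $\tfrac1{[S_{x,n}]}$ (after substitution) that absorbs $[S_{x,n}]$, whence $z$ vanishes and the children of the red root of $T_2$ are reattached to the parent of $z$. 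Carrying out these four verifications and confirming that each resulting fraction is literally $\Phi$ of the tree produced by the matching rule is the main --- though purely mechanical --- obstacle; the only point demanding care is the bookkeeping of the substitution along the ancestor chain of $z$ and of exactly when $[S_{x,n}]$ cancels rather than survives. Once organised around that single cancellation, no further idea is required.
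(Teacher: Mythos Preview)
Your proposal is correct and follows essentially the same route as the paper: both proofs reduce the statement to showing that $\Phi$ is a morphism of operads from $\BWTS$ to $\opFF$, verify this by the same case split (W), (R1), (R2), (R3), and then close the commutative diagram by matching the four generators. Your presentation is organised around a single unifying observation---what happens to the extra factor $[S_{x,n}]$ depending on the colour and position of the root of $T_2$---which is a slightly cleaner way to phrase what the paper does factor by factor, but the mathematical content is identical.
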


\Proof
We first show that the bijection $\Phi$ between $\BWTS$ and some
formal fractions is compatible with the operad $\circ_i$ operations,
meaning that it is a morphism of operads from $\BWTS$ to $\opFF$.

Let us check this for the various cases $(W)$, $(R1)$, $(R2)$ and
$(R3)$ in the composition $T_1 \circ_x T_2$ of elements of
$\BWTS$. Let $z$ be the vertex of $T_1$ containing $x$, which is
necessarily non-empty and therefore white. Let $f_1$ and $f_2$ be the
fractions associated to $T_1$ and $T_2$ by $\Phi$. Let $\hat{f}_1$ be
the fraction obtained from $f_1$ by replacing $x$ by the indices of
$T_2$. Let $S_2$ be the set of indices of $T_2$.

In the $(W)$ case, the root of $T_2$ is not red and the vertex
corresponding to $z$ in $T_1 \circ_x T_2$ remains white. By the
description of $(R2)$ and the definition of $\Phi$, the fraction
associated to $T_1 \circ_x T_2$ is the product of $f_2 [S_2]$ (coming
from vertices of $T_2$ except the white root) and $\hat{f_1}$ (coming
from vertices of $T_1$).

The $(R1)$ case follows from the fact that the unit of the operad
$\BWTS$ is mapped to the unit of the operad $\opFF$.

In the $(R2)$ case, the root of $T_2$ is red, and the vertex
corresponding to $z$ in $T_1 \circ_x T_2$ remains white. By the
description of $(R2)$ and the definition of $\Phi$, the fraction
associated to $T_1 \circ_x T_2$ is the product of $\hat{f}_1$ (from
vertices coming from $T_1$), $f_2$ (from vertices coming from $T_2$
other than the root of $T_2$) and $[S_2]$ (coming from the red root of
$T_2$, which becomes a non-root red vertex).

In the $(R3)$ case, the root of $T_2$ is red. By the description of
$(R2)$ and the definition of $\Phi$, the fraction associated to $T_1
\circ_x T_2$ is the product of $f_2$ (from vertices coming from $T_2$
other than the root of $T_2$) and $\hat{f}_1 [S_2]$ (from vertices
coming from $T_1$, except the vertex $z$ which was white and is removed)

In all cases, the fraction obtained is the same as the fraction $f_1
\circ_x f_2$ which is $[S_2] \hat{f}_1 f_2$ by the composition rule
\eqref{def.composition.ff} of $\opFF$. This proves that $\Phi$ is a
morphism of operads from $\BWTS$ to $\opFF$.

One can check on the four generators of $\opGR$ that the inclusion of
$\opGR$ in $\opFF_4$ is the same as the composite of $\Phi$ and the
isomorphism of $\opGR$ and $\BWTS$.

This also proves that the image of $\BWTS$ by the bijection $\Phi$
is the same as the image of the set-operad $\opGR$ since all fractions
can be obtained by applying the substitutions $\circ_i$ to the
generators.
\qed



Here are a $\opGR$ tree, its corresponding red and white tree and its
fraction.

\begin{gather*}
\begin{tikzpicture}[baseline=(current bounding box.east),
  every node/.style={inner sep=1.5pt, fill=blue!10, rounded corners}]
\node [fill=blue!30]{$\succ$}
  child {
    node {$\circ$}
      child {
        node {$1$}
      }
      child {
        node {$2$}
      }
  }
  child [missing]
  child [missing]
  child {
    node {$\Sigma$}
      child {
        node {$\succ$}
          child {
            node {$3$}
          }
          child {
            node {$\succ$}
              child {
                node {$4$}
              }
              child {
                node {$\Sigma$}
                  child {
                    node {$\prec$}
                      child {
                        node {$5$}
                      }
                      child {
                        node {$6$}
                      }
                  }
                  child {
                    node {$7$}
                  }
              }
          }
      }
      child [missing]
      child {
        node {$\prec$}
          child {
            node {$8$}
          }
          child {
            node {$9$}
          }
      }
  };
\end{tikzpicture}
\qquad
\begin{tikzpicture}
\node [root] {$\scriptstyle \left\{\right\}$}
  child {
    node [white] {$\scriptstyle \left\{1, 2\right\}$}
  }
  child [missing]
  child {
    node [red] {$\scriptstyle \left\{\right\}$}
      child {
        node [white] {$\scriptstyle \left\{\right\}$}
          child {
            node [white] {$\scriptstyle \left\{3\right\}$}
          }
          child {
            node [white] {$\scriptstyle \left\{4\right\}$}
          }
          child {
            node [red] {$\scriptstyle \left\{\right\}$}
              child {
                node [white] {$\scriptstyle \left\{5\right\}$}
                  child {
                    node [white] {$\scriptstyle \left\{6\right\}$}
                  }
              }
              child {
                node [white] {$\scriptstyle \left\{7\right\}$}
              }
          }
      }
      child [missing]
      child {
        node [white] {$\scriptstyle \left\{8\right\}$}
          child {
            node [white] {$\scriptstyle \left\{9\right\}$}
          }
      }
  };
\end{tikzpicture}
\\
\frac{\mbox{[3456789]} \mbox{[567]}}{\mbox{[123456789]} \mbox{[12]}
\mbox{[34567]} \mbox{[3]} \mbox{[4]} \mbox{[56]} \mbox{[6]} \mbox{[7]}
\mbox{[89]} \mbox{[9]}}
\end{gather*}

\section{Remarkable sub-operads}

As set-operads, there are many interesting and already known sub-operads of
$\BWTS$. On $\opGR$ trees, they correspond to selecting some operations
inside the four possible ones.
We shall see some examples and describe how they can be seen in terms of trees
in $\BWTS$.

\subsection{The operad $\BWT$}

Let us consider the sub-set-operad $\BWT$ of red and white trees generated
by
\begin{equation}
\left\{\,\geno12, \geng12, \gend12\,\right\}.
\end{equation}

We then have:

\begin{theorem}
\label{bwt}
The set sub-operad $\BWT$ of $\BWTS$ generated by the three previous trees has
as elements the red and white trees with no empty nodes.
\end{theorem}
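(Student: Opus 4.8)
The plan is to characterize exactly which red and white trees arise as compositions of the three generators $\geno12$, $\geng12$, $\gend12$, and show this set coincides with the red and white trees having no empty (hence no red) nodes. The containment in one direction is straightforward: each of the three generators has only nonempty (white) nodes, and I claim the composition rules $\circ_x$ preserve the property of having no empty node. Indeed, inspecting the rules $(W)$, $(R1)$, $(R2)$, $(R3)$, a new empty node can only be created when the vertex $z$ containing $x$ loses its last label; but since all our generators are white with nonempty labels and composition along a white vertex either keeps a label there or merges labels from $T_2$, I would verify case by case that $z$ never becomes empty when both $T_1$ and $T_2$ have no empty node. More simply, since none of our trees ever contains a red root, rule $(R)$ never applies, only rule $(W)$ does, and rule $(W)$ manifestly produces no empty node when neither input has one. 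This gives that every element of $\BWT$ is a red and white tree with no empty node.

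For the reverse inclusion, I would argue by induction on the weight $n$ that every fully-white recursively-labelled tree (equivalently, every red and white tree with no empty nodes, carrying its recursive labelling as an element of $\BWTS$) lies in the sub-operad generated by the three trees. The base cases $n=1$ and $n=2$ are immediate since the size-$2$ white trees are exactly $\geno12$, $\geng12$, $\gend12$. For the inductive step, given a white tree $T$ of size $n\geq 3$, I want to exhibit $T$ as $T' \circ_y T''$ with $T'$, $T''$ smaller white trees. I would reuse the decomposition already carried out in the proof of Proposition~\ref{prop-op4-bw}, but discard the branches that introduce red nodes: namely, if some node $z$ has at least two labels I peel off one label using the size-$2$ tree $\geno12$ (a white root), and otherwise I locate a leaf $l$ that is not an uncle/aunt and whose parent $p$ is \emph{labelled} (hence white, so the $(W)$-style splitting of that proof applies via $\geng12$ or $\gend12$). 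The point is that in a tree with no empty nodes, the problematic case of the earlier proof where $p$ is empty/red simply cannot occur, so every decomposition step stays inside the three white generators.

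The main obstacle I anticipate is verifying that the decomposition step always lands back among \emph{white} trees, i.e. that the smaller pieces $T'$ and $T''$ inherit the no-empty-node property and that the relevant composition rule is genuinely rule $(W)$ rather than one of the red rules. Concretely, I must check that removing a label or pruning a single-label leaf from a white tree never forces a node to become empty in $T'$: a node $z$ with multiple labels loses only one and stays nonempty, and pruning a leaf $l$ whose parent $p$ is white and labelled leaves $p$ still labelled. I also need the composition to be internal to $\BWT$, which follows from the first paragraph since both pieces are white. Assembling these observations, together with the counting/isomorphism framework already established for $\BWTS$, completes the characterization and proves that $\BWT$ consists exactly of the red and white trees with no empty nodes.
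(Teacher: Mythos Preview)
Your proposal is correct and follows essentially the same approach as the paper. The paper's proof is a two-sentence sketch: it says one checks directly from the composition rules that no empty nodes are created (your observation that only rule~$(W)$ ever applies makes this transparent), and for the converse it simply invokes ``the same technique as in the proof of Proposition~\ref{prop-op4-bw}'', which is exactly the induction and case analysis you spell out, noting that the empty-parent branch of that earlier argument is vacuous here.
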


\Proof
Given the product rules of $\BWTS$ (see Subsection~\ref{sub-op-rw}),
one easily checks that all trees belonging to $\BWT$ have no empty nodes.

Conversely, using the same technique as in the proof of
Proposition~\ref{prop-op4-bw}, one checks that any tree with no empty nodes
can be obtained as a composition of strictly smaller such trees.
\qed

So the cardinalities of this operad is Sequence~A200757
of~\cite{Sloane}, whose first elements are
\begin{equation}
  1, 3, 13, 68, 395, 2450, 15892, 106489, 731379, 5121392, 36425796, 262425982, \dots 
\end{equation}
with ordinary generating series $F=F_{\BWT}$ satisfying
\begin{equation}
  \label{eq_sg_T}
  F= (x+F)/(1-(x+F)) - F/(1-F),
\end{equation}
which is therefore algebraic.


\begin{note}
  For the corresponding unlabelled trees, we get the set of rooted
  trees with multiple dots, that correspond to Sequence~A036249
  of~\cite{Sloane}.  Their first numbers of elements are
  \begin{equation}
    1,2,5,13,37,108, 332, 1042, 3360, 11019, 36722, 123875, 422449, 1453553, \dots
  \end{equation}
\end{note}

\subsection{The operad $\BW$}

One can consider the subset $\BW$ of $\BWT$ of the subset $\BWTS$ generated by
\begin{equation}
\left\{\,\geng12, \gend12\,\right\}.
\end{equation}
We then have:
\begin{theorem}
\label{bw}
The set sub-operad $\BW$ of $\BWT$ generated by the two previous trees has
as elements the red and white trees with no empty nodes and no multiple labels
in their nodes. These trees are in immediate bijection with recursively
labelled rooted trees.
\end{theorem}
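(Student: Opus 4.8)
The plan is to mirror the proof of Theorem~\ref{bwt}, establishing the two inclusions separately and then reading off the announced bijection. For the inclusion $\BW\subseteq\{\text{trees with no empty nodes and no multiple labels}\}$, I would induct on the size using the composition rules of Subsection~\ref{sub-op-rw}. The two generators $\prec$ and $\succ$ (the trees $\treeNode{\prec}$ and $\treeNode{\succ}$, i.e. two-node chains) and the unit already have this shape. The key observation is that a tree with no empty node has a white, hence non-empty, root, so in a composition $T_1\circ_x T_2$ of two such trees the root of $T_2$ is never red; thus only Rule~(W) can ever apply. Rule~(W) erases the single label $x$ of the node $z$ and adds to $z$ the single label of the white root of $T_2$, attaching that root's children to $z$; hence $z$ stays white with exactly one label, and no empty node is created. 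Since $\BW\subseteq\BWTS$, its elements are moreover recursively labelled, so one inclusion follows.

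For the reverse inclusion I would argue by induction on the size, following the technique of Proposition~\ref{prop-op4-bw}. Let $T$ be a recursively labelled red and white tree with no empty nodes and single labels, of size $\ge 3$. The crucial step is to exhibit a leaf $l$ whose label is \emph{consecutive} to the label of its parent $p$. Given such a leaf, the two-node tree ``$p$ with child $l$'' is, after standardisation, the generator $\prec$ (when the label of $l$ exceeds that of $p$) or $\succ$ (otherwise), and one checks that $T=T'\circ_y T''$, where $T''$ is that generator and $T'$ is obtained from $T$ by deleting $l$ and renumbering. Because deleting a single value from an interval and decrementing the larger labels again produces an interval, $T'$ is once more a recursively labelled tree with no empty nodes and single labels, of size one less, so by induction $T'\in\BW$ and hence $T=T'\circ_y T''\in\BW$. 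The existence of a consecutive leaf is itself proved by induction on the tree: if some child of the root spans a subtree of size $\ge 2$ one recurses into that subtree (a consecutive pair there is consecutive in $T$); otherwise every child of the root is a leaf, their labels are exactly $[n]\setminus\{\ell_r\}$ where $\ell_r$ is the root's label, and since $n\ge 2$ at least one of $\ell_r-1,\ell_r+1$ lies in $[n]\setminus\{\ell_r\}$ and therefore labels a leaf adjacent to the root.

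The main obstacle is exactly this consecutiveness requirement. An arbitrary leaf cannot be peeled off, because the relabelling built into $\circ_y$ forces the freshly created leaf to carry a label adjacent to that of its parent; consequently the decomposition reconstructs $T$ only when $l$ and $p$ were already adjacent. Establishing that a recursively labelled tree always contains such a leaf, and then verifying that the shift conventions of the composition reproduce $T$ on the nose (rather than some tree with the same shape but shifted labels), is the technical heart of the argument; everything else is the same bookkeeping as in Proposition~\ref{prop-op4-bw}.

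Finally, the bijection with recursively labelled rooted trees is immediate. A red and white tree with no empty nodes and no multiple labels has all of its nodes white and each carrying exactly one label, so the colouring rule is entirely forced and records no information; forgetting the colours therefore yields precisely a rooted tree equipped with a recursive bijective labelling of $[n]$, and this correspondence is visibly invertible.
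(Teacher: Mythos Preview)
Your proof is correct and follows the same leaf-peeling strategy that the paper's one-line proof (which simply defers to Theorem~\ref{bwt} and thence to Proposition~\ref{prop-op4-bw}) intends. You are more careful than the paper in explicitly verifying the existence of a leaf whose label is consecutive to its parent's---a point the paper glosses over but which is genuinely needed for the size-$2$ decomposition $T=T'\circ_y T''$ to reproduce $T$.
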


\Proof
The proof is the same as in Theorem~\ref{bwt}.
\qed

This set of trees corresponds to the set of trees that are in bijection with
the $\opGR$ trees only containing $\gautrid\,$ and $\,\droittrid\,$ in their
internal nodes. The cardinalities of this operad is Sequence~A006013
of~\cite{Sloane}, whose first elements are
\begin{equation}
  1, 2, 7, 30, 143, 728, 3876, 21318, 120175, 690690, 4032015, 23841480, \dots 
\end{equation}
with ordinary generating series $F=F_{\BW}$ satisfying
\begin{equation}
  \label{eq_sg_D}
  F= x/(1-F)^2, 
\end{equation}
which is therefore algebraic.

This corresponds to the set-operad based on non-crossing
trees~\cite{leroux,CHNT}.

\subsection{The operad $\BWS$}

There is another trickier way to study the sub-set-operads of $\BWTS$.
Let us consider the sub-set-operad $\BWS$ of red and white trees generated
by
\begin{equation}
\left\{\,\geng21, \geng12, \gens12\,\right\}.
\end{equation}

We then have:

\begin{theorem}
  \label{bws}
  The set sub-operad $\BWS$ of $\BWTS$ generated by the three previous
  trees has as elements the red and white trees with no multiple
  labels and that avoid the two following patterns: no labelled node
  can have a red child, and no white node can have two red children.
\end{theorem}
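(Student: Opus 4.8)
The plan is to prove the two inclusions separately, following exactly the template of Proposition~\ref{prop-op4-bw} and Theorem~\ref{bwt}. First I record that, under the dictionary of the previous section, the three generating trees correspond to $\succ$, $\prec$ and $\odot$ (the first one, a root labelled $\{2\}$ with a single child labelled $\{1\}$, is the tree for $\succ$); so $\BWS$ is the sub-operad of $\BWTS$ obtained by dropping the generator $\miltrid$. Write $\mathcal{C}$ for the set of red and white trees with no multiple labels that avoid the two stated patterns. Since red nodes are by definition empty and a labelled node is white, the two conditions say precisely: a labelled node has only white children, and a white node has at most one red child. Combined with the colouring rule (an empty node is red iff all its children are white), an empty \emph{white} node therefore has \emph{exactly} one red child together with at least one white child.

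For the inclusion $\BWS\subseteq\mathcal{C}$ I would check that $\mathcal{C}$ is a sub-operad containing the unit and the three generators, so that it automatically contains the operad they generate. The generators and the one-node tree clearly lie in $\mathcal{C}$, so the work is to verify that $\mathcal{C}$ is closed under each case of the composition $T_1\circ_x T_2$. The point that replaces the analysis of Theorem~\ref{bwt} is that, $\miltrid$ being absent, every node of $T_1$ and $T_2$ carries at most one label: in case (W) the node $z$ carrying $x$ trades its single label for the (at most one) root label of $T_2$, so no doubly-labelled node is ever created. Moreover a labelled node only ever acquires \emph{white} children, because a non-red root of $T_2$ has only white children by the first pattern; this preserves the first pattern. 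In case (R2), and in the variant of (W) where the root of $T_2$ is empty and white, the node $z$ becomes empty and gains \emph{exactly one} red child — the first pattern applied to $T_1$ shows $z$ had none before, and the second pattern applied to $T_2$ contributes exactly one — which preserves the second pattern. Cases (R1) and (R3) are immediate, as they only redistribute white children. This direction is routine once the two patterns are read correctly.

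For the converse inclusion $\mathcal{C}\subseteq\BWS$ I would argue by induction on the size, reducing any $T\in\mathcal{C}$ to a composition $T=T'\circ_y g$ with $g$ one of the three generators and $T'\in\mathcal{C}$ strictly smaller, exactly as in Proposition~\ref{prop-op4-bw}. Choose a deepest internal node $p$, so that all children of $p$ are leaves and any child $l$ of $p$ is a leaf that is not an uncle/aunt. The crucial structural remark is that an empty white node always possesses a red child, which is itself internal; hence a deepest internal node $p$ can never be empty white, and is therefore either labelled or red. If $p$ is labelled, the first pattern forces $l$ to be white, and $T$ is recovered from a smaller tree by rule (W), peeling off $\prec$ or $\succ$. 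If $p$ is red with at least three children, I peel off an $\odot$ by rule (R3), replacing two of its white leaves by a single fresh leaf. If $p$ is red with exactly two children, then either $p$ is the root and $T=\odot$, or its parent $p'$ is, by the two patterns, an empty white node having $p$ as its unique red child, and I peel off an $\odot$ by rule (R2).

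The main obstacle will be this last direction, specifically the $\odot$ bookkeeping: one must check in each red-node case that the tree $T'$ produced again lies in $\mathcal{C}$ and that the prescribed rule, (R2) versus (R3), really reproduces $T$. This is exactly what makes $\BWS$ ``trickier'' than $\BWT$ and $\BW$: the two forbidden patterns are engineered precisely so that every red node can be detached using only $\odot$ and never the merging generator $\miltrid$, so the care lies in confirming both that they are preserved under these detachments and, conversely, that no tree outside $\mathcal{C}$ can arise from the generators. Once these verifications are in place the induction closes, and the two inclusions give $\BWS=\mathcal{C}$.
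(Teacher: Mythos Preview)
Your proposal is correct and follows the same two-inclusion strategy as the paper: closure of $\mathcal{C}$ under composition for $\BWS\subseteq\mathcal{C}$, and an inductive decomposition for $\mathcal{C}\subseteq\BWS$. Your case analysis of the composition rules matches the paper's almost verbatim (the paper also singles out the case where the red-rooted $T_2$ is grafted at a non-leaf $z$, producing an empty white node with a unique red child), and for the converse the paper simply asserts ``one easily sees that it can be obtained as a composition of strictly smaller such trees'', whereas you actually carry out the deepest-internal-node analysis; your observation that such a node cannot be empty white is exactly the point that makes the induction go through.
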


\Proof
Let us first show that the $\circ_i$ are internal on this set.
Let us consider the different cases appearing when computing
$T=T_1\circ_x T_2$, with $T_1$ and $T_2$ in $\BWS$.
If the root of $T_2$ is not red, $T$ also belongs to $\BWS$.
If the root of $T_2$ is red, and if $z$ is not a leaf, then $z$, being
labelled by $x$, cannot have a red child. So $T$ has now a white empty node
(replacing $z$) with only one red child. So $T$ also belongs to $\BWS$.

If the root of $T_2$ is red and $z$ is a leaf, then the parent of $z$ (if
$z$ has no parent, $T=T_2$ and the statement is trivial) can be whatever node,
it only gets white children, which does not fall in any forbidden pattern.

Now, given such a tree, one easily sees that it can be obtained as a
composition of strictly smaller such trees.
\qed

As a species, $\FSp = \FSp_\BWS$ is the solution of
\begin{align*}
  \FSp &= \BSp + \WSp         \\
  \BSp &= \SetSp_{\geq2}(\WSp)   \\
  \WSp &= \BSp\cdot \SetSp_{\geq1}(\WSp) + \ZSp\cdot\SetSp(\WSp)
\end{align*}

Here are the recursively labelled trees in $\BWTS$ of size 3:
\[\left[\ \begin{tikzpicture}
\node [root] {$\scriptstyle \left\{1\right\}$}
  child {
    node [white] {$\scriptstyle \left\{2\right\}$}
      child {
        node [white] {$\scriptstyle \left\{3\right\}$}
      }
  };
\end{tikzpicture}\,\begin{tikzpicture}
\node [root] {$\scriptstyle \left\{1\right\}$}
  child {
    node [white] {$\scriptstyle \left\{2\right\}$}
  }
  child {
    node [white] {$\scriptstyle \left\{3\right\}$}
  };
\end{tikzpicture}\,\begin{tikzpicture}
\node [root] {$\scriptstyle \left\{\right\}$}
  child {
    node [white] {$\scriptstyle \left\{3\right\}$}
  }
  child {
    node [red] {$\scriptstyle \left\{\right\}$}
      child {
        node [white] {$\scriptstyle \left\{2\right\}$}
      }
      child {
        node [white] {$\scriptstyle \left\{1\right\}$}
      }
  };
\end{tikzpicture}\,\begin{tikzpicture}
\node [red] {$\scriptstyle \left\{\right\}$}
  child {
    node [white] {$\scriptstyle \left\{1\right\}$}
  }
  child {
    node [white] {$\scriptstyle \left\{2\right\}$}
      child {
        node [white] {$\scriptstyle \left\{3\right\}$}
      }
  };
\end{tikzpicture}\,\begin{tikzpicture}
\node [red] {$\scriptstyle \left\{\right\}$}
  child {
    node [white] {$\scriptstyle \left\{2\right\}$}
  }
  child {
    node [white] {$\scriptstyle \left\{1\right\}$}
  }
  child {
    node [white] {$\scriptstyle \left\{3\right\}$}
  };
\end{tikzpicture}\ \right]\]

Here are those of size $4$:
\[
\left[\ \begin{tikzpicture}
\node [root] {$\scriptstyle \left\{1\right\}$}
  child {
    node [white] {$\scriptstyle \left\{2\right\}$}
      child {
        node [white] {$\scriptstyle \left\{3\right\}$}
          child {
            node [white] {$\scriptstyle \left\{4\right\}$}
          }
      }
  };
\end{tikzpicture}\,\begin{tikzpicture}
\node [root] {$\scriptstyle \left\{1\right\}$}
  child {
    node [white] {$\scriptstyle \left\{2\right\}$}
      child {
        node [white] {$\scriptstyle \left\{3\right\}$}
      }
      child {
        node [white] {$\scriptstyle \left\{4\right\}$}
      }
  };
\end{tikzpicture}\,\begin{tikzpicture}
\node [root] {$\scriptstyle \left\{1\right\}$}
  child {
    node [white] {$\scriptstyle \left\{\right\}$}
      child {
        node [white] {$\scriptstyle \left\{4\right\}$}
      }
      child {
        node [red] {$\scriptstyle \left\{\right\}$}
          child {
            node [white] {$\scriptstyle \left\{2\right\}$}
          }
          child {
            node [white] {$\scriptstyle \left\{3\right\}$}
          }
      }
  };
\end{tikzpicture}\,\begin{tikzpicture}
\node [root] {$\scriptstyle \left\{1\right\}$}
  child {
    node [white] {$\scriptstyle \left\{2\right\}$}
  }
  child {
    node [white] {$\scriptstyle \left\{3\right\}$}
      child {
        node [white] {$\scriptstyle \left\{4\right\}$}
      }
  };
\end{tikzpicture}\,\begin{tikzpicture}
\node [root] {$\scriptstyle \left\{1\right\}$}
  child {
    node [white] {$\scriptstyle \left\{2\right\}$}
  }
  child {
    node [white] {$\scriptstyle \left\{3\right\}$}
  }
  child {
    node [white] {$\scriptstyle \left\{4\right\}$}
  };
\end{tikzpicture}\,\begin{tikzpicture}
\node [root] {$\scriptstyle \left\{\right\}$}
  child {
    node [red] {$\scriptstyle \left\{\right\}$}
      child {
        node [white] {$\scriptstyle \left\{2\right\}$}
      }
      child {
        node [white] {$\scriptstyle \left\{1\right\}$}
      }
  }
  child [missing]
  child {
    node [white] {$\scriptstyle \left\{3\right\}$}
      child {
        node [white] {$\scriptstyle \left\{4\right\}$}
      }
  };
\end{tikzpicture}\,\begin{tikzpicture}
\node [root] {$\scriptstyle \left\{\right\}$}
  child {
    node [white] {$\scriptstyle \left\{3\right\}$}
  }
  child {
    node [red] {$\scriptstyle \left\{\right\}$}
      child {
        node [white] {$\scriptstyle \left\{2\right\}$}
      }
      child {
        node [white] {$\scriptstyle \left\{1\right\}$}
      }
  }
  child {
    node [white] {$\scriptstyle \left\{4\right\}$}
  };
\end{tikzpicture}\,\begin{tikzpicture}
\node [root] {$\scriptstyle \left\{\right\}$}
  child {
    node [white] {$\scriptstyle \left\{4\right\}$}
  }
  child {
    node [red] {$\scriptstyle \left\{\right\}$}
      child {
        node [white] {$\scriptstyle \left\{1\right\}$}
      }
      child {
        node [white] {$\scriptstyle \left\{2\right\}$}
          child {
            node [white] {$\scriptstyle \left\{3\right\}$}
          }
      }
  };
\end{tikzpicture}\right.
\]
\[\left.
\begin{tikzpicture}
\node [root] {$\scriptstyle \left\{\right\}$}
  child {
    node [white] {$\scriptstyle \left\{4\right\}$}
  }
  child {
    node [red] {$\scriptstyle \left\{\right\}$}
      child {
        node [white] {$\scriptstyle \left\{2\right\}$}
      }
      child {
        node [white] {$\scriptstyle \left\{1\right\}$}
      }
      child {
        node [white] {$\scriptstyle \left\{3\right\}$}
      }
  };
\end{tikzpicture}\,\begin{tikzpicture}
\node [red] {$\scriptstyle \left\{\right\}$}
  child {
    node [white] {$\scriptstyle \left\{1\right\}$}
  }
  child {
    node [white] {$\scriptstyle \left\{\right\}$}
      child {
        node [white] {$\scriptstyle \left\{4\right\}$}
      }
      child {
        node [red] {$\scriptstyle \left\{\right\}$}
          child {
            node [white] {$\scriptstyle \left\{2\right\}$}
          }
          child {
            node [white] {$\scriptstyle \left\{3\right\}$}
          }
      }
  };
\end{tikzpicture}\,\begin{tikzpicture}
\node [red] {$\scriptstyle \left\{\right\}$}
  child {
    node [white] {$\scriptstyle \left\{1\right\}$}
  }
  child {
    node [white] {$\scriptstyle \left\{2\right\}$}
      child {
        node [white] {$\scriptstyle \left\{3\right\}$}
          child {
            node [white] {$\scriptstyle \left\{4\right\}$}
          }
      }
  };
\end{tikzpicture}\,\begin{tikzpicture}
\node [red] {$\scriptstyle \left\{\right\}$}
  child {
    node [white] {$\scriptstyle \left\{1\right\}$}
  }
  child {
    node [white] {$\scriptstyle \left\{2\right\}$}
      child {
        node [white] {$\scriptstyle \left\{3\right\}$}
      }
      child {
        node [white] {$\scriptstyle \left\{4\right\}$}
      }
  };
\end{tikzpicture}\,\begin{tikzpicture}
\node [red] {$\scriptstyle \left\{\right\}$}
  child {
    node [white] {$\scriptstyle \left\{1\right\}$}
      child {
        node [white] {$\scriptstyle \left\{2\right\}$}
      }
  }
  child {
    node [white] {$\scriptstyle \left\{3\right\}$}
      child {
        node [white] {$\scriptstyle \left\{4\right\}$}
      }
  };
\end{tikzpicture}\,\begin{tikzpicture}
\node [red] {$\scriptstyle \left\{\right\}$}
  child {
    node [white] {$\scriptstyle \left\{2\right\}$}
  }
  child {
    node [white] {$\scriptstyle \left\{1\right\}$}
  }
  child {
    node [white] {$\scriptstyle \left\{3\right\}$}
      child {
        node [white] {$\scriptstyle \left\{4\right\}$}
      }
  };
\end{tikzpicture}\,\begin{tikzpicture}
\node [red] {$\scriptstyle \left\{\right\}$}
  child {
    node [white] {$\scriptstyle \left\{2\right\}$}
  }
  child {
    node [white] {$\scriptstyle \left\{1\right\}$}
  }
  child {
    node [white] {$\scriptstyle \left\{3\right\}$}
  }
  child {
    node [white] {$\scriptstyle \left\{4\right\}$}
  };
\end{tikzpicture}\ \right]
\]

The trees are in bijection with the $\opGR$ trees only containing
$\gautrid$, $\droittrid$, and $\odot$ in their internal nodes. So

\begin{corollary}
The set-operad on $\BWS$ is isomorphic to the set-operad on the three
tridendriform operations $\gautrid$, $\odot$, and $\droittrid$.
\end{corollary}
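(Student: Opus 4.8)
The plan is to obtain this Corollary as a restriction of the operad isomorphism $\BWTS\simeq\opGR$ established in Theorem~\ref{theorem_main}. The general fact I would invoke is that any isomorphism of set-operads carries the sub-operad generated by a subset $S$ of its elements isomorphically onto the sub-operad generated by the image of $S$: every element of a generated sub-operad is an iterated $\circ_i$-composition of elements of $S$, and such compositions are preserved bijectively by an operad isomorphism. So the whole problem reduces to identifying which elements of $\opGR$ the three generators of $\BWS$ correspond to.

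To do this I would read off the decoding used in the proof of Theorem~\ref{theorem_main},
\begin{equation*}
\left[\geno12,\geng12,\gend12,\gens12\right]
\Longleftrightarrow
[\treeone{\circ},\treeone{\prec},\treeone{\succ},\treeone{\odot}].
\end{equation*}
The three generators of $\BWS$ are $\geng21$, $\geng12$ and $\gens12$. Now $\geng21$ is, by definition of the macro, the tree with root labelled $\{2\}$ carrying a single white child labelled $\{1\}$, which is exactly $\gend12$; hence under the isomorphism it corresponds to $\droittrid$. Likewise $\geng12$ corresponds to $\gautrid$ and $\gens12$ to $\odot$. Thus the generating set $\{\geng21,\geng12,\gens12\}$ of $\BWS$ is carried onto $\{\gautrid,\odot,\droittrid\}$.

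Combining the two observations, the isomorphism $\BWTS\simeq\opGR$ restricts to an isomorphism between the sub-operad of $\BWTS$ generated by $\{\geng21,\geng12,\gens12\}$ --- which is $\BWS$ itself by Theorem~\ref{bws} --- and the sub-operad of $\opGR$ (equivalently of $\opFF_4$) generated by $\{\gautrid,\odot,\droittrid\}$, i.e. the set-operad on these three tridendriform operations. This is precisely the claimed isomorphism. I expect no genuine obstacle here, since all the substantive content lies in Theorem~\ref{bws}, which identifies $\BWS$ as a concrete family of trees, and in Theorem~\ref{theorem_main}; the only point requiring a moment's care is the verbatim identity $\geng21=\gend12$, which is what matches the generator $\geng21$ with $\droittrid$ rather than with $\gautrid$.
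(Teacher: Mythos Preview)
Your proposal is correct and follows exactly the approach the paper intends: the Corollary is stated without a separate proof, immediately after the sentence ``The trees are in bijection with the $\opGR$ trees only containing $\gautrid$, $\droittrid$, and $\odot$ in their internal nodes,'' so the paper is likewise invoking the restriction of the isomorphism $\BWTS\simeq\opGR$ to the sub-operads generated by the matching generators. Your only addition is making the identification $\geng21=\gend12\leftrightarrow\droittrid$ explicit, which is harmless; note also that you do not actually need Theorem~\ref{bws} for the argument, since $\BWS$ is \emph{defined} as the sub-operad generated by those three trees.
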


The cardinalities of this operad is Sequence~A121873
of~\cite{Sloane}, whose first elements are
\begin{equation}
  1, 3, 14, 80, 510, 3479, 24848, 183465, 1389090, 10726452, 84150858, \dots 
\end{equation}
with ordinary generating series $F=F_{\BWS}$ satisfying
\begin{align*}
  \label{eq_sg_S}
  F &= R + W         \\ 
  R &= 1/(1-W) -1 -W  \\
  W &= R/(1-W)^2 - R + x/(1-W)^2,
\end{align*}
which is therefore algebraic.


This corresponds to the operad of noncrossing plants already defined by
Chapoton~\cite{Ch}.

This sequence also appears as Example (g) in \cite{Lod04}.





\section{Symmetric operads}

\label{symmop}

All results presented above have analogs in the world of symmetric operads.
Indeed, given either the $\opGR$ trees, or the red and white trees, or the
fractions, one can consider their pendant as symmetric operads.

\subsection{The symmetric operad on red and white trees}

In Section~\ref{sub-op-rw}, an operad structure was defined on the set of red
and white trees with arbitrary labeling. Relabeling by a permutation clearly
defines an action of the symmetric groups. Those tree forms a species which is
defined by Equation~\eqref{eq_FSp}.
\begin{theorem}
\label{thm-rw-oper}
The set $\BWTSL$ endowed with operations $\circ_x$ and the natural symmetric
groups actions has a symmetric set-operad structure. This operad will be
also be denoted by $\BWTSL$.
\end{theorem}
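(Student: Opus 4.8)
The plan is to leverage the already-established non-symmetric statement. Theorem~\ref{thm-rw-oper} asks for a \emph{symmetric} set-operad structure on $\BWTSL$, and the essential compositional content has already been verified: the theorem immediately preceding it (``The set $\BWTSL$ endowed with operations $\circ_x$ is a symmetric set-operad'') proves that the $\circ_x$ are internal and satisfy the two operad associativity axioms, written in the set-indexed form that is natural for symmetric operads. So what remains is genuinely the \emph{symmetric} part: I would first observe that relabelling a red and white tree by a permutation $\sigma \in \SG_n$ is a well-defined operation (the uncolored tree shape and its node structure are untouched; only the integer labels on the dots are permuted, and the red/white coloring rule depends only on shape, not on which integers sit where), and that this defines a left action of $\SG_n$ on $\BWTSL(n)$ since $(\sigma\tau)$ relabels by composing the two relabellings.

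The core of the argument is then the \emph{equivariance} (compatibility) of this action with the composition maps: I need to check, for trees $T_1 \in \BWTSL(m)$, $T_2 \in \BWTSL(n)$, a label $x$ in $T_1$, and permutations $\sigma, \tau$, the standard axioms relating $(\sigma \cdot T_1) \circ_{\sigma(x)} (\tau \cdot T_2)$ to a suitable relabelling of $T_1 \circ_x T_2$. The cleanest route is to use the species/set-indexed formulation mentioned in Section~\ref{sec.operads}: compositions $\circ_i \colon P(I) \otimes P(J) \to P(I\setminus\{i\} \sqcup J)$ that are \emph{natural} in $I$ and $J$. The composition rules (W), (R1), (R2), (R3) of Subsection~\ref{sub-op-rw} are all described purely in terms of combinatorial gluing operations—erasing a label, moving children, gluing a root as a child—none of which reference the actual integer values of the labels, only which label is the distinguished one being substituted at. Hence each rule is manifestly invariant under any order-preserving renaming of labels, and more generally natural with respect to arbitrary bijections of label sets. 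I would make this precise by noting that the renumbering convention (shift labels greater than $x$ in $T_1$, shift all labels of $T_2$) is exactly the bookkeeping that realizes the abstract set-indexed composition in the concrete integer-labelled model, so the concrete $\circ_x$ is just the set-indexed $\circ_i$ transported along the standard order-preserving identifications $I \cong [\,|I|\,]$.

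The main obstacle, and the only place requiring care, is that this naturality must be compatible with the renumbering convention: the concrete operad works with trees labelled by intervals $[n]$, whereas the abstract action permutes labels arbitrarily, so I must verify that permuting then composing agrees with composing then permuting once the shifts are accounted for. Concretely, I would argue that the underlying combinatorial output (which dot ends up in which node, with which parent) produced by $T_1 \circ_x T_2$ is determined entirely by the \emph{relative} positions of labels and by which label is distinguished, and is independent of the specific integers used; relabelling by a permutation simply carries this output to the corresponding relabelled tree. This is precisely the statement that $\circ_x$ is natural in the label sets, which is what the species-level axioms require. Since associativity and internality are inherited verbatim from the non-symmetric theorem and the action is compatible as just described, $\BWTSL$ with the $\SG_n$-actions is a symmetric set-operad.

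I expect this verification to be routine rather than deep precisely because the composition rules were designed to be label-blind: every case analysis in the preceding theorem's proof (both internality and the two associativity identities) goes through unchanged under relabelling, since none of those arguments invoked the numerical order of labels beyond the bookkeeping shifts. Thus the proof reduces to the single sentence-level observation that the rules (W) and (R1)--(R3) are equivariant, plus a citation of the already-proved operad axioms.
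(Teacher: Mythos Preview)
Your proposal is correct and follows essentially the same approach as the paper: cite the already-established operad axioms from the earlier theorem and then argue that the composition rules (W), (R1)--(R3) are equivariant under relabelling. The paper's own proof is literally the single sentence ``The definition of the composition clearly commutes with relabeling,'' so your version is a careful unpacking of exactly that observation, with the species/naturality framing making explicit what the paper leaves as self-evident.
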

\begin{proof}
  The definition of the composition clearly commutes with relabeling.
\end{proof}
Let us investigate more closely the actions of the groups. The species
equation allows to compute the characters of the representations which we
prefer to encode as a symmetric function using Frobenius characteristic (see
e.g.~\cite{MacDo}). Using the notations from the later, Equation~\eqref{eq_FSp}
translate directly on Frobenius characteristic into the following equation:
\begin{equation}
  F = (E_1 \circ F - 1 - F) + (E_t - 1) (E_1\circ F)\,
\end{equation}
where $E_t = \sum_{i\geq 0} e_i t^i$ is the generating series of elementary
symmetric functions, and $\circ$ is the plethysm operation. The solution of
this equation can be computed inductively. Here are the first few terms
expanded on Schur functions:
$$s_{1}, s_{1,1} + 3s_{2}, 10s_{3} + 2s_{1,1,1} + 10s_{2,1}, 6s_{1,1,1,1} + 40s_{4} + 64s_{3,1} + 38s_{2,2} + 34s_{2,1,1}$$
We also need some information on the orbits:
\begin{proposition}\label{prop_orbits}
  Under the action of the symmetric groups on $\BWTSL$ there exists at least
  one recursively labeled tree in each orbit.
\end{proposition}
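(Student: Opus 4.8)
The plan is to reduce the statement to a purely combinatorial fact about shapes and then to exhibit a recursive labelling by a single depth-first construction. First I would observe that two elements of $\BWTSL(n)$ lie in the same $\SG_n$-orbit precisely when they have the same underlying red and white tree in $\BWU(n)$: any bijective labelling of a fixed shape can be carried to any other such labelling by a permutation of $[n]$. Hence orbits correspond bijectively to shapes, and it suffices to prove that \emph{every} shape in $\BWU(n)$ admits at least one recursive labelling, that is, one in which the set of labels of the descendants of every node (including its own) is an interval of $[n]$.

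Next I would construct such a labelling by induction on the weight (number of dots), or equivalently by a pre-order traversal. Fix an arbitrary order on the children of each node. The construction assigns to the subtree rooted at a node $z$ an interval $[a,b]$ whose length equals the weight of that subtree: give the $m$ dots of $z$ (with $m=0$ allowed for an empty node) the first $m$ integers of $[a,b]$, then partition the remaining integers of $[a,b]$ into consecutive blocks of sizes $w_1,\dots,w_k$, where $w_1,\dots,w_k$ are the weights of the subtrees of the children $c_1,\dots,c_k$ in the chosen order, and recurse by assigning block $i$ to $c_i$. Starting from the root with the interval $[1,n]$ yields a full bijective labelling by $[n]$, which is exactly what the depth-first numbering produces: the labels encountered while inside the subtree of $z$ form a consecutive range.

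The verification is then immediate: by construction the subtree of each node receives a consecutive block of integers, so its descendant-label set is an interval, which is precisely the recursive condition. I would also remark that the coloring never interferes, since the red/white colors depend only on which nodes are empty and on the colors of their children — data left untouched by relabelling — so the constructed labelling colors the shape exactly as required and indeed lies in $\BWTS(n)$. The only point demanding any care, and the main (minor) obstacle, is to confirm that the interval $[a,b]$ assigned to $z$ decomposes without overlap into the $m$ labels of $z$ together with the consecutive child-blocks; this holds because $m+w_1+\dots+w_k$ equals the weight of the subtree of $z$, hence the length of $[a,b]$. This makes the induction close and exhibits, in each orbit, a recursively labelled representative.
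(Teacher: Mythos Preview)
Your proposal is correct and follows essentially the same approach as the paper: reduce to the fact that every unlabelled red and white tree admits a recursive labelling, and produce one by a depth-first (pre-order) traversal that labels the dots of the root before recursing into the subtrees. The paper states this in two sentences without spelling out the interval bookkeeping; your version makes explicit the assignment of consecutive blocks to subtrees and the weight-count verification, but the underlying argument is identical.
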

\begin{proof}
  It is sufficient to prove that there exists a recursive labeling for each
  unlabeled red and white tree. This is clear as such a labeling can be
  obtained by a recursive depth-first walk of a tree labeling for example the
  dots in the roots before the dots in the sub-trees.
\end{proof}
\begin{corollary}\label{cor_orbits}
  As a symmetric operad, $\BWTSL$ is generated by the trees:
  \begin{equation}
    \label{def_GSigma}
    G^\Sigma := \left\{\ \geng12\ \geno12\ \gens12\ \right\}.
  \end{equation}
\end{corollary}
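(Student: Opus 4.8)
The plan is to combine the two preparatory results already established: Proposition~\ref{prop_orbits}, which guarantees that every orbit of $\BWTSL$ under the symmetric group action meets the set $\BWTS$ of recursively labelled trees, and Proposition~\ref{prop-op4-bw}, which identifies $\BWTS$ with the non-symmetric sub-operad generated by the four size-$2$ trees $\geng12$, $\geno12$, $\gend12$, and $\gens12$. The only genuinely new ingredient is that, in the symmetric setting, the group action lets us drop one of these four generators, namely $\gend12$.

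First I would observe that $\gend12$ lies in the symmetric sub-operad generated by $G^\Sigma$. Indeed, relabelling $\geng12$ by the transposition $(1\,2)$ exchanges the labels of its two nodes and produces exactly $\gend12$; since a symmetric sub-operad is by definition closed under the symmetric group action, $\gend12$ is already present. Consequently the symmetric sub-operad generated by $G^\Sigma = \{\geng12,\ \geno12,\ \gens12\}$ contains all four trees $\geng12$, $\geno12$, $\gend12$, and $\gens12$.

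Next, a symmetric sub-operad is in particular closed under the composition maps $\circ_i$ and contains the unit of size $1$, so it is also a non-symmetric sub-operad. Since it contains the four generators listed above, Proposition~\ref{prop-op4-bw} shows that it contains all of $\BWTS$, that is, every recursively labelled red and white tree.

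Finally I would invoke Proposition~\ref{prop_orbits}: any tree $T \in \BWTSL$ lies in the same orbit as some recursively labelled tree $T' \in \BWTS$, so $T = \sigma \cdot T'$ for a suitable permutation $\sigma$. As the symmetric sub-operad generated by $G^\Sigma$ contains $T'$ and is closed under the action of $\sigma$, it contains $T$ as well. Hence it is all of $\BWTSL$, which is the claim. The argument is essentially bookkeeping; the only point requiring care is the interplay between symmetric and non-symmetric generation, namely that closure under both composition and relabelling is exactly what upgrades the non-symmetric generation of $\BWTS$ to the symmetric generation of $\BWTSL$.
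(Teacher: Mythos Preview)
Your proposal is correct and follows essentially the same route as the paper: observe that the transposition $(1\,2)$ sends $\geng12$ to $\gend12$, use Proposition~\ref{prop-op4-bw} to conclude that compositions of the four size-$2$ trees yield all of $\BWTS$, and then invoke Proposition~\ref{prop_orbits} to reach every orbit in $\BWTSL$. Your write-up is in fact a bit more explicit about the interplay between symmetric and non-symmetric closure than the paper's own proof.
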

\begin{proof}
  Note that the action of the transposition $(12)$ on $e:=\geng12$ is
  $f:=\gend12$. Therefore it is not necessary to put the later in the generators.
  By Proposition~\ref{prop-op4-bw}, the set of elements obtained by composition
  from $G^\Sigma\cap\{f\}$, without using the action of the symmetric groups
  is exactly the set of recursively labeled trees. Therefore using the action
  we get at least the set of the orbit of the recursively labeled trees, that
  is all labeled trees.
\end{proof}
The main goal of this section is to show that the symmetric operad $\BWTSL$ is
isomorphic to the symmetric sub-operad of formal fraction generated by the
corresponding fraction.

\subsubsection{The symmetric operad on fractions}

In Section~\ref{sec_bij_RW_Frac}, we defined a map $\phi$ from red and white
tree to formal fraction; See Equation~\eqref{eq_bij_RW_Frac}. We showed that
this map restricted to recursively labeled trees is an isomorphism of
(non-symmetric) operads from $\BWTS$ to $\opFF$. It is also an
isomorphism of symmetric operads:
\begin{theorem}
  The map $\Phi$ is an isomorphism of symmetric operads from $\BWTSL$ to
  the sub-operad of $\SYMOP\opFF$ generated by the fractions:
  \begin{equation}
    F_{\droittrid}:=\frac{1}{[1][12]},\quad
    F_{\miltrid}:=\frac{1}{[12]},\quad
    F_{\odot}:=\frac{1}{[1][2]}\,.
  \end{equation}
\end{theorem}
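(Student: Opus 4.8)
The plan is to upgrade the known non-symmetric statement to the symmetric one with as little new work as possible. Injectivity costs nothing: it is exactly Lemma~\ref{lemma_Phi_injective}, which already gives $\Phi$ as an injection of $\BWTSL$ into $\opFF$. So the three things I would check, in order, are (i) that $\Phi$ intertwines the symmetric group actions, (ii) that $\Phi$ commutes with the composition maps $\circ_x$ on \emph{all} of $\BWTSL$ and not merely on the recursively labelled sub-operad $\BWTS$, and (iii) that the resulting image is exactly the symmetric sub-operad generated by the three displayed fractions.

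For (i) I would argue straight from the formula $\Phi(T)=\prod_{z}E(z)$ of Equation~\eqref{eq_bij_RW_Frac}: each factor $E(z)$ depends only on the color of $z$ and on the set $S(z)$ of labels occurring in the sub-tree rooted at $z$. Relabelling $T$ by a permutation $\sigma$ replaces every $S(z)$ by $\sigma(S(z))$ while leaving the shape and the colors untouched, whence $\Phi(\sigma\cdot T)=\sigma\cdot\Phi(T)$; thus $\Phi$ is a morphism of species. For (ii) I would invoke the computation already carried out in the proof of Theorem~\ref{theorem_main}: the case analysis over the rules $(W)$, $(R1)$, $(R2)$, $(R3)$ produces $\Phi(T_1\circ_x T_2)=\Phi(T_1)\circ_x\Phi(T_2)$ using only the shapes and colors of $T_1,T_2$ and the fact that the vertex carrying $x$ is white. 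Combined with (i), this makes $\Phi$ a morphism of symmetric operads from $\BWTSL$ to $\SYMOP\opFF$.

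The one delicate point — and the main, if mild, obstacle — is licensing step (ii) on the full $\BWTSL$, since Theorem~\ref{theorem_main} was phrased for recursively labelled trees. I would dispose of it by rereading that computation and verifying that the recursive hypothesis is never used: the set $S_2$, the substituted fraction $\hat f_1$, and the discrimination between the rules $(W)$ and $(R1)$–$(R3)$ are all determined by the shapes and colors alone, so the verification is genuinely label-agnostic and extends verbatim to arbitrary labellings. (As a cross-check one can note that, by Proposition~\ref{prop_orbits}, every orbit already meets $\BWTS$, so equivariance from (i) makes the extension plausible a priori.)

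Finally, for (iii), once $\Phi$ is a morphism of symmetric operads its image is the symmetric sub-operad generated by the images of any generating set. By Corollary~\ref{cor_orbits}, $\BWTSL$ is generated as a symmetric operad by $G^\Sigma=\{\,\geng12,\ \geno12,\ \gens12\,\}$, and a direct evaluation gives $\Phi(\geng12)=\tfrac{1}{[2][12]}=F_{\gautrid}$, $\Phi(\geno12)=\tfrac{1}{[12]}=F_{\miltrid}$, and $\Phi(\gens12)=\tfrac{1}{[1][2]}=F_{\odot}$. Since the transposition $(12)$ carries $F_{\gautrid}$ to $F_{\droittrid}=\tfrac{1}{[1][12]}$, the symmetric sub-operad generated by $\{F_{\gautrid},F_{\miltrid},F_{\odot}\}$ coincides with the one generated by $\{F_{\droittrid},F_{\miltrid},F_{\odot}\}$ in the statement. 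Injectivity together with this identification of the image then yields that $\Phi$ is the desired isomorphism of symmetric operads.
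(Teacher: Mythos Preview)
Your proposal is correct and follows the same overall architecture as the paper: injectivity from Lemma~\ref{lemma_Phi_injective}, equivariance read off the defining formula~\eqref{eq_bij_RW_Frac}, compatibility with $\circ_x$ via Theorem~\ref{theorem_main}, and identification of the image via Corollary~\ref{cor_orbits}. The one difference is in how you extend step~(ii) from $\BWTS$ to all of $\BWTSL$: you argue that the case analysis in Theorem~\ref{theorem_main} is label-agnostic, whereas the paper instead uses Proposition~\ref{prop_orbits} (every orbit contains a recursively labelled tree) together with equivariance to transport the identity $\Phi(T_1\circ_x T_2)=\Phi(T_1)\circ_x\Phi(T_2)$ from recursively labelled representatives to arbitrary trees---the argument you mention only as a cross-check. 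Both routes are sound; yours is slightly more direct, the paper's makes the role of the symmetric structure more explicit.
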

Note: the action of the transposition $(1,2)$ on $F_{\droittrid}$ is
$F_{\gautrid}:=\frac{1}{[2][12]}$.
\begin{proof}
  We already proved (Lemma~\ref{lemma_Phi_injective}) that as a set map $\Phi$
  is injective from $\BWTSL$ to $\opFF$. From its definition it is also clear
  that $\Phi$ commutes with the action of the symmetric group. Now we know
  that on recursively labeled trees, $\Phi$ commute with compositions
  $\circ_i$ (Theorem~\ref{theorem_main}). Since there is a recursively labeled
  tree in every orbit, using the action we get that $\Phi$ commute with
  compositions $\circ_i$ on all red and white trees. Therefore $\Phi$ is an
  injective symmetric operad morphism from $\BWTSL$ to $\SYMOP\opFF$. Let us
  consider its image $\Phi(\BWTSL)$. It is generated by $\left\{\Phi(g)\mid
    g\in G\right\}$, where $G$ is any generating set of $\BWTSL$. The theorem
  is then obtained using the set $\G^\Sigma$ of Equation~\eqref{def_GSigma}.
\end{proof}


\subsection{The symmetric sub-operads}

Let us consider a subset of the generators of $\BWTS$ and the symmetric and
non symmetric operads generated by this subset. Using the same argument as in
Proposition~\ref{prop_orbits} and Corollary \ref{cor_orbits}, one sees that
the underlying set of the symmetric one is given by the orbits of the
symmetric group on the underlying set of the non-symmetric one. This amounts
to consider the set of the same red and white trees with no restrictions on
the labels.

\subsubsection{The symmetric sub-operad $\BWTL$}

Let us consider the symmetric analog of $\BWT$. When labelling red
and white trees with no empty nodes with different integers from $1$
to $n$ without any other constraint, one gets Sequence~A048802
of~\cite{Sloane} whose first number of elements are
\begin{equation}
  1, 3, 16, 133, 1521, 22184, \dots 
\end{equation}
with exponential generating function $F=F_{\BWTL}$ satisfying
\begin{equation}
  \label{eq_sg_TL}
  F = (\exp(x)-1) \exp(F).
\end{equation}

\subsubsection{The symmetric sub-operad $\BWL$}

Let us consider the symmetric analog of $\BW$.  When labelling red and white
trees with no empty or multiple nodes with different integers from $1$ to $n$
without any other constraint, one gets Sequence~A000169 of~\cite{Sloane} whose
value is $n^{n-1}$ and first number of elements are
\begin{equation}
1, 2, 9, 64, 625, 7776, 117649, 2097152, 43046721, 1000000000, 25937424601 
\end{equation}
with exponential generating function $F=F_{\BWL}$ satisfying
\begin{equation}
  \label{eq_sg_L}
  F = x \exp(F).
\end{equation}

This is isomorphic to the set-operad $\NAP$\footnote{standing for Non-Associative Permutative} \cite{LivernetNAP}, because it is
generated by one generator which satisfies the same relations as the
generator of the $\NAP$ operad and the dimensions are the same.

\subsubsection{The symmetric sub-operad $\BWSL$}

Let us consider the symmetric analog of $\BWS$. When labelling the
corresponding red and white trees with different integers from $1$ to
$n$ without any other constraint, one gets Sequence~A048172
of~\cite{Sloane} whose first terms are
\begin{equation}
  1, 3, 19, 195, 2791, 51303, 1152019, 30564075, 935494831, 32447734143, \dots 
\end{equation}
with exponential generating function $F=F_{\BWSL}$ satisfying
\begin{align*}
  \label{eq_sg_SL}
  F &= R + W         ,\\
  R &= \exp(W) -1 -W   ,\\
  W &= R (\exp(W) -1) + x \exp(W).
\end{align*}

Recalling the constraints described in Theorem \ref{bws}, one can
obtain this system of equations as follows. Here $R$ (resp. $W$)
denotes the generating series of trees with a red root (resp with a
white root). The second equation says that a red root has only white
sons. The last equation says that a white root is either empty and has
exactly one red son, or has a label and an arbitrary set of white
sons.

\begin{note}
  This operad is isomorphic to the operads of shrubs, which was
  defined in \cite{shrub,shrubbij} by the very same closure as this
  operad. The combinatorial objects called shrubs are directed graphs
  with levels in $\NN$, where arrows go down by one level, satisfying
  some forbidden pattern conditions. This is rather different at first
  sight from red and white trees with the given constraints.

  Using the operad structure, once the components of arity $2$ have
  been matched, one can easily define by induction a bijection between
  shrubs and red and white trees with the given constraints, that
  gives an isomorphism of operads between the operad of shrubs and
  $\BWSL$.
\end{note}









\section{More general operads on more generators}

Recall that there is a natural morphism from the dendriform operad to
the tridendriform operad sending $\succ$ to $\circ\ + \succ$ and
$\prec$ to itself (the other convention is also possible). It is therefore very natural to add the
counterpart in $\opMould1$ of $\circ\ + \succ$ and $\circ\ + \prec$ to
the generating set of $\opFF_4$. This leads to several interesting
operads which do not live in formal fractions, but rather in a
slightly more general operad of formal fractions with monomials, which
is defined as a Hadamard product. In this section, we present the
combinatorial properties of these operads, omitting most of the proofs
as they are either direct consequences of the previous results or
derived by very similar reasoning.  \bigskip

Recall that the generators of the tridendriform operad are realized as
moulds by \eqref{def_phi1_sup}, \eqref{def_phi1_inf},
\eqref{def_phi1_circ} and \eqref{def_phi1_sum}. With this convention
the two new generators are realized as:
\begin{equation}
\circ\ + \succ\ \mapsto\ \frac{u_1}{(u_1-1)(u_1u_2-1)},   \quad
\circ\ + \prec\ \mapsto\ \frac{u_2}{(u_2-1)(u_1u_2-1)},\\
\end{equation}
We consider here the sub-operad generated by the four fractions together with
these two new.  To be able to deal with this operad using formal fractions we
need to generalize them a little using an operad on monomials: let $\opMon(n)$
be the set of monomials in $u_1, \dots, u_n$. The composition defined, for
$F\in\opMon(m)$ and $G\in\opMon(n)$, by
\begin{equation}
  \label{def.composition.mon}
  F \circ_i G :=
  F(u_1, \dots, x_{i-1}, P_{i,n}, u_{i+1},\dots,u_{m+n-1})\,
  G(u_i, \dots, u_{i+n-1})\,
\end{equation}
where $P_{i,n}=u_i u_{i+1}\dots u_{i+n-1}$, endows $\opMon$ with a
structure of a set-operad. Together with the action of the symmetric
group, it becomes a symmetric set-operad denoted by $\SYMOP{\opMon}$.
This operad has already appeared in~\cite{Lod10}. One denotes by $\opMFF:=\opMon\times\opFF$ the Hadamard
product of these set-operads. Elements of $\opMFF(n)$ are pairs
$(m,f) \in \opMon(n)\times\opFF(n)$, the composition being defined
componentwise. We denote such an element by putting the monomial in
the numerator of the formal fraction.  Then it is clear from the
definition of the composition in $\opMould1$
(by Eq. \eqref{def.composition.mould1}) that the morphism $\phi_1$ from
$\opFF$ to $\opMould1$ extends to $\opMFF$:
\begin{proposition}
  The map $\phi_{\mathcal{M}}:(m,f) \mapsto\ m\,\phi_1(f)$ is an injective
  morphism of set-operads from $\opMFF$ to $\opMould1$. It is also an
  injective morphism of symmetric set-operads with the symmetric version of
  those operads.
\end{proposition}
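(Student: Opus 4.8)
The plan is to reduce the statement to two facts already available in the excerpt: that composition in the Hadamard product $\opMFF=\opMon\times\opFF$ is componentwise, so that $(m,f)\circ_i(m',f')=(m\circ_i m',\,f\circ_i f')$, and that $\phi_1$ is a morphism of (symmetric) set-operads from $\opFF$ to $\opMould1$. Writing $\phi_{\mathcal{M}}(m,f)=m\,\phi_1(f)$, the image is a fraction in $\opMould1$ carrying the extra monomial $m$ on top of the $\phi_1$-image of a formal fraction; the whole argument is the observation that the composition rule \eqref{def.composition.mould1} of $\opMould1$ respects the splitting of such a fraction into a monomial part and a fraction part.

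For the morphism property I would expand the right-hand side directly. Writing $b$ for the arity of $(m',f')$ and $P:=P_{i,b}=u_i\cdots u_{i+b-1}$, the rule \eqref{def.composition.mould1} gives
\[
\phi_{\mathcal{M}}(m,f)\circ_i\phi_{\mathcal{M}}(m',f')
=(P-1)\,\big(m\,\phi_1(f)\big)(\dots,P,\dots)\,\big(m'\,\phi_1(f')\big)(u_i,\dots,u_{i+b-1}).
\]
Because substitution of variables is multiplicative, the factors $m$ and $m'$ pull out and the product splits as
\[
m(\dots,P,\dots)\,m'(u_i,\dots,u_{i+b-1})
\quad\text{times}\quad
(P-1)\,\phi_1(f)(\dots,P,\dots)\,\phi_1(f')(u_i,\dots,u_{i+b-1}).
\]
The first factor is $m\circ_i m'$ by \eqref{def.composition.mon}, and the second is $\phi_1(f)\circ_i\phi_1(f')$ by \eqref{def.composition.mould1}; here the one point to check is that the prefactor $(P-1)=\phi_1([S_{i,b}])$ is attributed to the fraction part and not the monomial part. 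Using that $\phi_1$ is a morphism, the second factor equals $\phi_1(f\circ_i f')$, whence the product is $(m\circ_i m')\,\phi_1(f\circ_i f')=\phi_{\mathcal{M}}\big((m,f)\circ_i(m',f')\big)$. The unit goes to the unit, since $(1,1)\mapsto 1\cdot\phi_1(1)=1$.

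For injectivity I would show that the splitting $m\,\phi_1(f)\mapsto(m,\phi_1(f))$ is itself injective. Each symbol $[S]$ is sent by $\phi_1$ to $\prod_{j\in S}u_j-1$, which is coprime to every variable $u_i$ (its restriction to $u_i=0$ is a nonzero polynomial), so the quotient of products $\phi_1(f)$ has neither zero nor pole along any hyperplane $u_i=0$. Consequently the $u_i$-adic valuation of $m\,\phi_1(f)$ equals that of $m$, so $m$ is recovered as the corresponding monomial; dividing out recovers $\phi_1(f)$ and then $f$, by the injectivity of $\phi_1$. The symmetric statement is then immediate: the symmetric-group actions permute variables on $\opMon$ and $\opMould1$ and indices on $\opFF$, and both the monomial inclusion and $\phi_1$ (which is equivariant) commute with relabeling, so $\phi_{\mathcal{M}}$ does too and the same computation applies.

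The main obstacle is purely bookkeeping: one must line up the three composition rules \eqref{def.composition.mon}, \eqref{def.composition.mould1}, \eqref{def.composition.ff} so that the degree-one prefactor and the middle substitution land in the correct factor — in particular recognizing $(P_{i,b}-1)=\phi_1([S_{i,b}])$ as part of the fraction — after which the morphism property is a one-line consequence of the known morphism property of $\phi_1$. The only genuine algebraic input beyond that is the coprimality of the factors $\prod_{j\in S}u_j-1$ with the variables, used for injectivity.
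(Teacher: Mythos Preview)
Your argument is correct and is precisely the kind of verification the paper has in mind. The paper does not give a proof of this proposition: it simply remarks that ``it is clear from the definition of the composition in $\opMould1$ (by Eq.~\eqref{def.composition.mould1}) that the morphism $\phi_1$ from $\opFF$ to $\opMould1$ extends to $\opMFF$'' and states the proposition. Your write-up is the natural unpacking of that sentence, and the bookkeeping you do---splitting the $\opMould1$-composition into a monomial part matching \eqref{def.composition.mon} and a fraction part carrying the factor $P_{i,b}-1=\phi_1([S_{i,b}])$---is exactly what is needed. The injectivity argument via the $u_i$-adic valuation (using that each $\prod_{j\in S}u_j-1$ is coprime to every $u_i$) is a clean way to separate the monomial from $\phi_1(f)$ and goes slightly beyond what the paper spells out anywhere; it is correct and is the right level of detail for something the paper leaves implicit.
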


\subsection{The $6$-generator operad and more}
We now consider the sub-operad $\opFF_6$ of $\opMFF$, generated by the four
generator of $\opFF_4$ together with
\begin{equation}
  F_{\gg}:=\frac{u_1}{[1][12]},\quad
  F_{\ll}:=\frac{u_2}{[2][12]}\,.
\end{equation}
The generators satisfies the following $16$ relations:
\begin{equation}\label{relations16}
\begin{aligned}
  \treeLft{\odot}{\odot}\ &=\ \treeRgt{\odot}{\odot}
  &
  \treeLft{\circ}{\circ}\ &=\ \treeRgt{\circ}{\circ}
  &
  \treeLft{\circ}{\prec}\ &=\ \treeRgt{\circ}{\succ}
  &
  \treeLft{\circ}{ \ll }\ &=\ \treeRgt{\circ}{ \gg }
  \\[5mm]
  \treeLft{\prec}{\prec}\ &=\ \treeRgt{\prec}{\odot}
  &
  \treeLft{ \ll }{ \ll }\ &=\ \treeRgt{ \ll }{\odot}
  &
  \treeLft{\succ}{\odot}\ &=\ \treeRgt{\succ}{\succ}
  &
  \treeLft{ \gg }{\odot}\ &=\ \treeRgt{ \gg }{ \gg }
  \\[5mm]
  \treeLft{\prec}{\circ}\ &=\ \treeRgt{\circ}{\prec}
  &
  \treeLft{ \ll }{\circ}\ &=\ \treeRgt{\circ}{ \ll }
  &
  \treeLft{\circ}{\succ}\ &=\ \treeRgt{\succ}{\circ}
  &
  \treeLft{\circ}{ \gg }\ &=\ \treeRgt{ \gg }{\circ}
  \\[5mm]
  \treeLft{\prec}{\succ}\ &=\ \treeRgt{\succ}{\prec}
  &
  \treeLft{ \ll }{\succ}\ &=\ \treeRgt{\succ}{ \ll }
  &
  \treeLft{\prec}{ \gg }\ &=\ \treeRgt{ \gg }{\prec}
  &
  \treeLft{ \ll }{ \gg }\ &=\ \treeRgt{ \gg }{ \ll }
\end{aligned}
\end{equation}
and generates an operad whose cardinalities are
$$1, 6, 56, 640, 8158, 111258, 1588544, 23446248, 354855218,\dots$$
This can be computed by counting the canonical trees as in Section
\ref{subsection:canonical}. The system of equations for generating
series, analog to \eqref{systeme_gr4}, writes
\begin{equation}
  \label{systeme_gr6}
  \left\{
    \begin{aligned}
      f &= l + L + m + r + R + s + 1, \\
      l &= x \,f\,(l+L+m+r+R+1),  \\
      L &= x \,f\,(l+L+m+r+R+1),  \\
      r &= x \,f\,(R+s+1),      \\
      R &= x \,f\,(l+s+1),      \\
      m &= x \,f\,(s+1),      \\
      s &= x \,f\,(l+L+m+r+R+1).  \\
    \end{aligned}
  \right.
\end{equation}
where $f$, $l$, $L$, $m$, $r$, $R$ and $s$ denotes respectively the
generating series of all canonical trees, all canonical trees having
$\prec$, $\ll$ $\circ$, $\succ$, $\gg$ and $\odot$ as their root.

Eliminating all the variables but $f$ and $x$, and setting $F=xf$
gives the following algebraic equation for the generating series
\begin{equation}
\label{solution_systeme_gr6}
F = x + 3x F + (3+x) {F}^2 +(1-x) {F}^3 - {F}^4\,,
\end{equation}
which can be rewritten as
\begin{equation}
x = F\frac{1 - 3F - F^2 + F^3}{1 + 3F + F^2 - F^3}\,.
\end{equation}

To show that there are no other relations, we need to use a generalized red
and white trees operad. It is defined as the set of trees where one can put a
dot on the edges between two white nodes. The associated fraction is the
fraction associated to non-dotted tree times the product on each label of the
variable associated to it to the power the number of dotted edges on the path
from the root. This clearly defines an injection $\phi_\mathcal{M}$ from dotted
red and white trees to $\opMFF$.

Here are two examples:
\[\begin{tikzpicture}
\node [root] {$\scriptstyle \left\{8\right\}$}
  child {
    node [white] {$\scriptstyle \left\{3, 4\right\}$}
      child {
        node [red] {$\scriptstyle \left\{\right\}$}
          child {
            node [white] {$\scriptstyle \left\{2\right\}$}
          }
          child {
            node [white] {$\scriptstyle \left\{1\right\}$}
          }
      }
    edge from parent [-*, thick]
  }
  child [missing]
  child {
    node [white] {$\scriptstyle \left\{\right\}$}
      child {
        node [white] {$\scriptstyle \left\{7\right\}$}
        edge from parent [-*, thick]
      }
      child {
        node [red] {$\scriptstyle \left\{\right\}$}
          child {
            node [white] {$\scriptstyle \left\{6\right\}$}
          }
          child {
            node [white] {$\scriptstyle \left\{5\right\}$}
          }
      }
  };
\end{tikzpicture}\qquad
\frac{u_1u_2u_3u_4u_7\mbox{[12]} \mbox{[56]}}{\mbox{[1]} \mbox{[2]} \mbox{[1234]}
\mbox{[5]} \mbox{[6]} \mbox{[7]} \mbox{[567]} \mbox{[12345678]}}\]

\[\begin{tikzpicture}
\node [root] {$\scriptstyle \left\{8\right\}$}
  child {
    node [white] {$\scriptstyle \left\{3, 4\right\}$}
      child {
        node [red] {$\scriptstyle \left\{\right\}$}
          child {
            node [white] {$\scriptstyle \left\{2\right\}$}
          }
          child {
            node [white] {$\scriptstyle \left\{1\right\}$}
          }
      }
  }
  child [missing]
  child {
    node [white] {$\scriptstyle \left\{\right\}$}
      child {
        node [white] {$\scriptstyle \left\{7\right\}$}
        edge from parent [-*, thick]
      }
      child {
        node [red] {$\scriptstyle \left\{\right\}$}
          child {
            node [white] {$\scriptstyle \left\{6\right\}$}
          }
          child {
            node [white] {$\scriptstyle \left\{5\right\}$}
          }
      }
    edge from parent [-*, thick]
  };
\end{tikzpicture}\qquad
\frac{u_5u_6u_7^2\mbox{[12]} \mbox{[56]}}{\mbox{[1]} \mbox{[2]} \mbox{[1234]}
\mbox{[5]} \mbox{[6]} \mbox{[7]} \mbox{[567]} \mbox{[12345678]}}\]

The extension of the rules for the operad composition is
straightforward, except for rule (R3) which should be modified as
follows: let us consider two trees $T_1$ and $T_2$ such that $x$ is
the only label of a leaf $z$. Suppose moreover that there is a dotted
edge from $z'$ to $z$.  The composition $T_1 \circ_x T_2$ is then
defined as the tree obtained by remove the leaf $z$ and putting the
children of the root of $T_2$ as new dotted children of $z'$. On can
easily check that this defines an operad on dotted trees such
that $\phi_\mathcal{M}$ is a morphism to $\opMFF$. For example, the
following equality
$$
\begin{tikzpicture}
\node [root]{$\scriptstyle \left\{1\right\}$}
  child {
    node [white] {$\scriptstyle \left\{2\right\}$}
    edge from parent [-*, thick]
  }
  child {
    node [white] {$\scriptstyle \left\{3,4\right\}$}
  };
\end{tikzpicture}\circ_2\blackTree
\ =\ 
\begin{tikzpicture}
\node [root]{$\scriptstyle \left\{1\right\}$}
  child {
    node [white] {$\scriptstyle \left\{2\right\}$}
    edge from parent [-*, thick]
  }
  child {
    node [white] {$\scriptstyle \left\{4\right\}$}
      child {
        node [white] {$\scriptstyle \left\{3\right\}$}
      }
    edge from parent [-*, thick]
  }
  child {
    node [white] {$\scriptstyle \left\{5,6\right\}$}
  };
\end{tikzpicture}
$$
is mapped to the following formal fraction composition
$$
\frac{u_{2}}{\mbox{[1234]} \mbox{[2]} \mbox{[34]}} 
\circ_2
\frac{1}{\mbox{[1]} \mbox{[23]} \mbox{[2]}}
\ =\ 
\frac{u_{2} u_{3} u_{4}}{\mbox{[123456]} \mbox{[2]} \mbox{[34]} \mbox{[3]} \mbox{[56]}}
$$

The number of those dotted trees can be obtained by counting the number of
white-white edges in the undotted tree compositions. This can be done by
refining Equation \eqref{eq_sg_TS_first} using a variable $t$ to record those
edges. Then we can show that the generating series verifies the following
equations:
\begin{multline}\label{rwTreesAlgEq:t}
F = -{\left(t - 1\right)} F^{4} - {\left(t - 1\right)} F^{3} x - {\left(t -
    3\right)} F^{2} x - {\left({\left(t - 1\right)}^{2} - 2\right)} F^{3} \\
+ {\left(2 \, t - 1\right)} F^{2} + 3 \, F x + x
\end{multline}
\begin{equation}
x = F\frac{{\left(t - 1\right)} F^{3} + {\left(t^{2} - 2 \, t - 1\right)} F^{2}
- {\left(2 \, t - 1\right)} F + 1}{\left(-{\left(t - 1\right)} F^{3} -
{\left(t - 3\right)} F^{2} + 3 \, F + 1\right)}
\end{equation}
Here are the first generating polynomials:
\begin{equation}
  \begin{aligned}
    1\\
    2\,t + 2\\
    7\,t^2 + 11\,t + 6\\
    30\,t^3 + 65\,t^2 + 59\,t + 22\\
    143\,t^4 + 397\,t^3 + 492\,t^2 + 318\,t + 90\\
    728\,t^5 + 2471\,t^4 + 3857\,t^3 + 3430\,t^2 + 1728\,t + 394
  \end{aligned}
\end{equation}
One can check that substituting $t=2$ in \eqref{rwTreesAlgEq:t} gives back
\eqref{solution_systeme_gr6}. This can be used to show the following theorem:
\begin{theorem}
  The set-operad $\opFF_6$, the operad presented by Equation
  \eqref{relations16} and the operad of recursively labelled dotted red and
  white trees are isomorphic.
\end{theorem}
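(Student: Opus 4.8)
The plan is to follow exactly the three-way comparison used for Theorem~\ref{theorem_main}, replacing $\opFF_4$, $\opGR$ and $\BWTS$ by $\opFF_6$, the operad $\mathcal{P}_6$ presented by \eqref{relations16}, and the operad $\mathcal{D}$ of recursively labelled dotted red and white trees. As before, the argument has three ingredients: a surjection from the presented operad onto the operad of fractions, an injective morphism from the tree operad into $\opMFF$ whose image is exactly $\opFF_6$, and a comparison of generating series forcing every arity-wise surjection in sight to be a bijection.

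First I would check that the six fractions $F_{\droittrid}, F_{\gautrid}, F_{\miltrid}, F_{\odot}, F_{\gg}, F_{\ll}$ satisfy the sixteen relations \eqref{relations16}. This is a direct computation in $\opMFF$ using the componentwise composition: the $\opFF$-component is handled exactly as the eight relations of the four-generator case, while the new $\opMon$-component is checked with the rule \eqref{def.composition.mon}, the point being that the monomials $u_1$ and $u_2$ attached to $F_{\gg}$ and $F_{\ll}$ transport correctly under composition. Since $\mathcal{P}_6$ is by definition the free operad on the six binary generators modulo \eqref{relations16}, this yields a surjective morphism $\mathcal{P}_6 \twoheadrightarrow \opFF_6$.

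Next I would treat the tree side. The operad structure on dotted trees has already been defined (with the modified rule (R3) for a dotted edge), and the analog of Proposition~\ref{prop-op4-bw} --- every recursively labelled dotted tree is a composition of strictly smaller ones --- shows that this operad is generated by its six size-two elements; I would prove it by the same leaf-removal induction, now additionally recording whether the relevant edge carries a dot. The map $\phi_{\mathcal{M}}$ sends a dotted tree to the pair in $\opMFF$ obtained by multiplying the fraction $\Phi$ of the underlying red and white tree by the monomial recording, for each label, the number of dotted edges above it. The content is that $\phi_{\mathcal{M}}$ is an injective morphism of operads; together with the generation statement its image is then exactly $\opFF_6$. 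Injectivity follows the reconstruction of Lemma~\ref{lemma_Phi_injective}: the fraction part recovers the undecorated red and white tree as before, and the monomial exponents then pin down which white--white edges carry a dot. For the morphism property I would redo the case analysis $(W),(R1),(R2),(R3)$ of Theorem~\ref{theorem_main}, carrying the monomial bookkeeping through; in $(W),(R1),(R2)$ the dot on the edge incident to $z$ is untouched and the monomials simply multiply via \eqref{def.composition.mon}, reproducing $f_1\circ_x f_2$, while the genuinely new point is case $(R3)$ with a dotted edge from $z'$ to $z$, where removing the leaf $z$ and re-attaching the children of the root of $T_2$ as dotted children of $z'$ must generate exactly the extra power of the variables of $T_2$ demanded by \eqref{def.composition.mon}. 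Verifying this last identity is the crux of the tree-side argument.

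Finally I would match cardinalities. Applying the rewriting and canonical-tree method of Section~\ref{subsection:canonical} to the enlarged presentation gives the system \eqref{systeme_gr6} and hence \eqref{solution_systeme_gr6} for the number of canonical $\mathcal{P}_6$-trees; and the refinement \eqref{rwTreesAlgEq:t}, which counts dotted trees with a variable $t$ per white--white edge, specializes at $t=2$ (each such edge being dotted or not) to the same series \eqref{solution_systeme_gr6}. Thus $\mathcal{P}_6$, $\mathcal{D}$ and $\opFF_6$ have equal cardinalities in every arity. The surjection $\mathcal{P}_6 \twoheadrightarrow \opFF_6$ is therefore bijective, hence an isomorphism of operads, and $\phi_{\mathcal{M}}$ is a bijection onto $\opFF_6$; matching the six generators as in \eqref{relations16} makes the whole triangle compatible, so the three operads are isomorphic. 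I expect the main obstacle to be the confluence of the sixteen-relation rewriting system: there are many more critical pairs than in the eight-relation case, so one must re-verify that rewriting an edge never creates a new rewritable edge, ensuring that each class still contains a unique canonical tree and that \eqref{solution_systeme_gr6} genuinely counts $\mathcal{P}_6$. The second, more routine but still delicate, difficulty is the exponent bookkeeping in the dotted $(R3)$ case of the morphism verification.
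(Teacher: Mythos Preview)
Your proposal is correct and follows exactly the approach the paper intends: the paper explicitly states that Section~7 omits most proofs because they are ``either direct consequences of the previous results or derived by very similar reasoning,'' and the only argument it gives for this theorem is precisely the one you outline --- set up the surjection from the presented operad to $\opFF_6$, define the injective morphism $\phi_{\mathcal{M}}$ from dotted trees into $\opMFF$, and close the triangle by observing that the substitution $t=2$ in \eqref{rwTreesAlgEq:t} reproduces \eqref{solution_systeme_gr6}. Your identification of the two genuine verification points (confluence of the sixteen-relation rewriting system, and the monomial bookkeeping in the dotted $(R3)$ case) is accurate; the paper simply takes both for granted.
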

Substituting $t=0$ is the preceding generating series gives the cardinalities
of the operad of red and white trees with no white-white edges. It is
isomorphic to the operad generated by $\{\circ,\odot\}$ that is the operad
with two associative operations and no other relations. The cardinalities are
known as large Schroeder number (Sloane's sequence A006318). The leading
coefficient corresponds to the operad $\BW$.
\bigskip

Finally these $t$-parametrized generating series suggests the
existence of a family of operads $\BWTS_k$ indexed by any integer
$k\in\NN$ whose generating series of dimensions is given by the
solution of Equation \eqref{rwTreesAlgEq:t} with $t=k$. Such an operad
can be defined as the extension of the red and white trees where
instead of putting dots of one color on white-white edges one can put
dots of $k$ possible colors.

\subsection{Symmetric counterparts}
All the operads considered in the previous subsection have their symmetric
counterparts. As a species, the $t$ colored red and white trees are given by
the following equation system:
\begin{equation}
  \begin{aligned}
    \FSp &=\WSp + \RSp \\
    \BSp &= \SetSp_{\geq2}(\WSp)   \\
    \WSp &= \SetSp_{\geq1}(\ZSp)\cdot\SetSp(t\WSp + \RSp) +
    \RSp\cdot\SetSp_{\geq1}(t\WSp) +
    \SetSp_{\geq2}(\RSp)\cdot\SetSp(t\WSp)
  \end{aligned}
\end{equation}
From the preceding system, one can of course extract equations for the
exponential generating series $\FSp(x)$. Here are the result after eliminating
$\BSp$ and simplifying:
\begin{gather}
  \begin{aligned}
    \FSp &= \exp(\WSp) - 1 \\
    \WSp &= \exp(x + t\WSp + \exp(\WSp) -\WSp - 1) +\WSp - \exp(t\WSp) -
    \exp(\WSp) + 1
  \end{aligned}
\end{gather}
The coefficients in $x$ count the number of arbitrary labeled red and white trees, with white-white edges colored with $t$ possible
color. Here are the first values:
\begin{equation}
  \begin{aligned}
    1\\
    2\,t + 2\\
    9\,t^2 + 15\,t + 8\\
    64\,t^3 + 156\,t^2 + 144\,t + 52\\
    625\,t^4 + 2050\,t^3 + 2675\,t^2 + 1730\,t + 472\\
    7776\,t^5 + 32430\,t^4 + 55000\,t^3 + 50310\,t^2 + 25108\,t + 5504 
  \end{aligned}
\end{equation}
Dotted trees appear when $t=2$, giving the following cardinalities:
\begin{equation}
  1, 6, 74, 1476, 41032, 1464672, 63865328, 3290120832, 195537380704
\end{equation}
\begin{theorem}
  The symmetric set-operad $\SYMOP\opFF_6$ and the symmetric operad of dotted red and
  white trees are isomorphic.
\end{theorem}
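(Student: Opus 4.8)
The plan is to mirror the argument already used for the symmetric version of $\opFF_4$ (the isomorphism between $\BWTSL$ and the symmetric sub-operad of $\SYMOP\opFF$). Recall that in the non-symmetric setting the map $\phi_\mathcal{M}$ from dotted red and white trees to $\opMFF$ is an injection, and that the preceding (non-symmetric) theorem identifies $\opFF_6$ with the operad of \emph{recursively} labelled dotted trees via this map. The whole point is to bootstrap from recursively labelled trees to arbitrary labellings using the symmetric group action, exactly as before.

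First I would record the two structural facts on which everything rests. On the one hand, $\phi_\mathcal{M}$ manifestly commutes with relabelling: permuting the labels of a dotted tree permutes the indices of the associated element of $\opMFF$ in the same way, since the coloured edge-dots only contribute monomial factors attached to existing labels and are not themselves moved by a permutation. On the other hand, the analog of Proposition~\ref{prop_orbits} holds: every orbit of the symmetric group acting on dotted red and white trees contains a recursively labelled representative. This is proved exactly as there, by a depth-first traversal assigning the smallest available integers first; the presence of coloured dots on white--white edges plays no role in this labelling procedure, since a recursive labelling is a condition on the integer labels alone.

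Next I would combine these with the non-symmetric theorem. That theorem says that $\phi_\mathcal{M}$ intertwines the combinatorial $\circ_x$ with the composition of $\opMFF$ on recursively labelled dotted trees. Given an arbitrary composition $T_1 \circ_x T_2$, one may use the symmetric group action to pass to recursively labelled representatives, apply the non-symmetric compatibility there, and transport back; since $\phi_\mathcal{M}$ commutes with the action at every stage, the identity $\phi_\mathcal{M}(T_1 \circ_x T_2) = \phi_\mathcal{M}(T_1) \circ_x \phi_\mathcal{M}(T_2)$ follows for all labellings. Hence $\phi_\mathcal{M}$ is an injective morphism of symmetric operads from the dotted red and white trees into $\SYMOP{\opMFF}$. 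It then remains to identify the image with $\SYMOP{\opFF_6}$: the image is the symmetric sub-operad generated by $\{\phi_\mathcal{M}(g)\}$ for $g$ ranging over any symmetric generating set, and as in Corollary~\ref{cor_orbits} the transposition $(1,2)$ sends $\prec$ to $\succ$ and $\ll$ to $\gg$, so that such a generating set is obtained from the six generators of $\opFF_6$ by discarding $\succ$ and $\gg$; applying $\phi_\mathcal{M}$ yields precisely the generators defining $\SYMOP{\opFF_6}$, and the two symmetric sub-operads coincide.

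The only genuinely delicate point is the orbit-transport step: one must verify that the modified rule~(R3) for dotted edges is equivariant, so that composing recursively labelled representatives and then relabelling really does reproduce the composition of the original trees. This reduces to a direct inspection of the composition rules for dotted trees, and once it is checked the remainder of the argument is routine.
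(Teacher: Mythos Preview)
Your proposal is correct and follows precisely the approach the paper intends: the paper omits the proof of this theorem, stating in the preamble to Section~7 that the arguments are ``derived by very similar reasoning'' to the $4$-generator case, and you have carried out exactly that transfer, mirroring the proof of the symmetric theorem in Section~\ref{symmop} (using the analogs of Proposition~\ref{prop_orbits} and Corollary~\ref{cor_orbits} together with the non-symmetric isomorphism for $\opFF_6$). Your identification of the equivariance of the modified rule~(R3) as the one point requiring a fresh check is apt, and the rest is indeed routine.
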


As in the non symmetric case, the constant coefficients (Sloane's A006351) are
the cardinalities of the operad of red and white trees with no white-white
edges. It is isomorphic to the operad generated by $\{\circ,\odot\}$ that is
the operad with two associative and commutative operations and no other
relations. Indeed, this operad can be naturally encoded by series-parallel
networks with $n$ labeled edges. %
\def\NSp{\mathcal{N}}%
\def\PSp{\mathcal{P}}%
\def\SSp{\mathcal{S}}%
Recall that series-parallel networks are defined as a species by
\begin{equation}
  \begin{aligned}
    \NSp &= \ZSp + \SSp + \PSp \\
    \SSp &= \SetSp_2(\ZSp + \PSp)\\
    \PSp &= \SetSp_2(\ZSp + \SSp)
  \end{aligned}
\end{equation}
of course as a species $\SSp=\PSp$. The bijection $\phi$ with red and white
trees goes inductively as follows:
\begin{itemize}
\item The singleton $\ZSp$ is the identity of the operad and corresponds to
  the tree consisting only of a leaf labeled $1$;
\item A series network with edges labeled by $a,b,\dots$ and with parallel
  sub-networks $A,B,\dots$ corresponds to a white node labeled by the set
  $\{a,b,c\dots\}$ and with red sub-trees $\phi(A),\phi(B),\dots$.
\item A parallel network with edges labelled by $a,b,\dots$ and with series
  sub-networks $A,B,\dots$ corresponds to a red rooted sub-trees (of size
  $\geq2$) $\phi(A),\phi(B),\dots$, and leaves labeled by $a,b\dots$.
\end{itemize}
This bijection clearly commutes with the action of the symmetric groups by
relabeling, showing that red and white trees and series-parallel network are
isomorphic as species. One can then check that the morphism is actually a
morphism of operads.

Finally, one can remark that, as in the non-symmetric case, the
leading coefficient corresponds to the operad $\BWL$.

\footnotesize

\end{document}